\newtheorem{thm}{Theorem}[section]
\newtheorem{lem}[thm]{Lemma}
\newtheorem{prop}[thm]{Proposition}
\newtheorem{ex}[thm]{Example}
\numberwithin{equation}{section}
\newcommand{\<}{\begin{equation}}
\renewcommand{\>}{\end{equation}}
\let\oo=\infty
\author{Javad Rastegari}
\address{Department of Mathematics,
University of Western Ontario,
London, Canada}
\email{jrastega@uwo.ca}
\author{Gord Sinnamon}
\address{Department of Mathematics,
University of Western Ontario,
London, Canada}
\email{sinnamon@uwo.ca}
\thanks{Supported by the Natural Sciences and Engineering Research Council of Canada}
\keywords{Fourier Transform, Fourier Inequalities, weighted Lebesgue space, Fourier series, Fourier coefficients, weights, Lorentz space, rearrangement.}
\subjclass[2010]{Primary 42B35, Secondary 46E30, 42B05.}
\begin{document}

\title{Weighted Fourier inequalities via rearrangements}

\begin{abstract} The method of using rearrangements to give sufficient conditions for Fourier inequalities between weighted Lebesgue spaces is revisited. New results in the case $q<p$ are established and a comparison between two known sufficient conditions is completed. In addition, examples are given to show that a simple weight condition that is sufficient for the weighted Fourier inequality in the cases $2<q<p$ and $1<q<p<2$ is no longer sufficient in the case $1<q<2<p$, contrary to statements in Theorems 1 and 4 of,``Weighted Fourier inequalities: new proofs and generalizations'', J. Fourier Anal. Appl. 9 (2003), 1--37. Several alternatives are given for strengthening the simple weight condition to ensure sufficiency in that case.
\end{abstract}

\maketitle

\section{Introduction}
Fix a positive integer $n$. For which indices $p$, $q$ and which weights $U$ and $W$ does there exist a constant $C<\oo$ such that the Fourier inequality
\<\label{Lebesgue}
\bigg(\int_{\mathbb R^n} |\hat f(y)|^qU(y)\,dy\bigg)^{1/q}
\le C\bigg(\int_{\mathbb R^n}|f(x)|^pW(x)\,dx\bigg)^{1/p}
\>
holds for all $f\in L^1(\mathbb R^n)$? If there is such a constant then the Fourier transform is a bounded operator from a dense subspace of $L^p_W(\mathbb R^n)$ into $L^q_U(\mathbb R^n)$. Here the Fourier transform of a function in $L^1(\mathbb R^n)$ is defined as an integral operator by
\[
\hat f(x)=\int_{\mathbb R^n}e^{-2\pi ix\cdot y}f(y)\,dy.
\]
Using the inequality above, if $C$ is finite, a standard argument will extend the integral operator to a linear operator on the whole space $L^p_W(\mathbb R^n)$. So for our purposes it will suffice to restrict attention to functions in $L^1(\mathbb R^n)$.

In the case $1<p\le q<\oo$, the question was addressed by three nearly simultaneous but different approaches appearing in 1983-84, all involving rearrangements of the weights. See \cites{BH,H1,JS1, JS2,M1,M2}. The case $1< p<\oo$, $0<q<p$ has been considered as well, in \cites{H2,BHJ} and later in \cite{BH2003}, but clarification and improvement of this case is still possible. This is the object of the present paper.

The non-increasing rearrangement of $f^*$ of a $\mu$-measurable function $f$ is defined for $t>0$ by,
\[
f^*(t)=\inf\{\alpha>0:\mu_f(\alpha)\le t\},\quad\text{where}\quad
\mu_f(\alpha)=\mu\{x:|f(x)|>\alpha\}.
\]
We refer to \cite{BS} for standard properties of the non-increasing rearrangement, just mentioning  two: {\it Hardy's Lemma} states that if $f_1$ and $f_2$ are non-negative measurable functions on $(0,\oo)$ then 
\[
\int_0^\oo f_1g\le\int_0^\oo f_2g
\]
holds for all non-negative non-decreasing $g$ if and only if it holds with $g=\chi_{(t,\oo)}$ for each $t>0$. The {\it Hardy-Littlewood-Polya Inequality} shows that for any non-negative measurable $f,g$ defined on $\mathbb R^n$, 
\[
\int_{\mathbb R^n}fg\le\int_0^\oo f^*g^*.
\]

Given non-negative, Lebesgue measurable functions $U$ and $W$ on $\mathbb R^n$, define $u$ and $w$ by $u=U^*$ and $1/w=(1/W)^*$. The functions $u$ and $w$ are defined on $(0,\oo)$, they take values in $[0,\oo]$, $u$ is non-increasing and $w$ is non-decreasing. The rearranged Fourier inequality below, expressed in terms of the weights $u$ and $w$, gives a sufficient condition for (\ref{Lebesgue}). For a proof see, for example, the proof of \cite{BH2003}*{Theorem 1}.

\begin{prop}\label{Lorentz implies Lebesgue} If $p,q\in (0,\oo)$ and
\<\label{Lorentz}
\bigg(\int_0^\oo (\hat f)^*(t)^qu(t)\,dt\bigg)^{1/q}
\le C\bigg(\int_0^\oo f^*(s)^pw(s)\,ds\bigg)^{1/p}, \quad f\in L^1(\mathbb R^n),
\>
then (\ref{Lebesgue}) holds with the same constant $C$.
\end{prop}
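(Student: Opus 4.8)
My plan is to prove (\ref{Lebesgue}) by sandwiching its two sides against the two sides of (\ref{Lorentz}). Writing $g=|f|^p$ and using that $t\mapsto t^r$ is non-decreasing for $r>0$, so that the rearrangement operation commutes with it, it suffices to establish the two one-sided estimates
\[
\int_{\mathbb R^n}|\hat f(y)|^qU(y)\,dy\le\int_0^\oo (\hat f)^*(t)^qu(t)\,dt
\]
and
\[
\int_0^\oo f^*(s)^pw(s)\,ds\le\int_{\mathbb R^n}|f(x)|^pW(x)\,dx.
\]
Raising the first to the power $1/q$, applying (\ref{Lorentz}), and then raising the second to the power $1/p$ chains these into (\ref{Lebesgue}) with the same constant $C$, since $t\mapsto t^{1/q}$ and $t\mapsto t^{1/p}$ are non-decreasing.

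The first estimate is immediate from the Hardy-Littlewood-Polya inequality applied to the non-negative functions $|\hat f|^q$ and $U$, together with the identities $(|\hat f|^q)^*=((\hat f)^*)^q$ and $U^*=u$. The second estimate is the substantial one. Setting $V=1/W$, so that $W=1/V$ and, by the definition of $u$ and $w$, $w=1/V^*$, it reads $\int_0^\oo g^*(s)/V^*(s)\,ds\le\int_{\mathbb R^n}g(x)/V(x)\,dx$ for $g=|f|^p$. Since the Hardy-Littlewood-Polya inequality furnishes only an \emph{upper} bound for $\int g/V$, I will instead derive the required \emph{lower} bound from a ``bathtub'' principle: for any measurable $A$ with $|A|=m<\oo$,
\[
\int_A \frac1V\,dx\ge\int_0^m\frac1{V^*(s)}\,ds.
\]
To see this, choose $\alpha$ with $|\{V>\alpha\}|=m$; since $\{V>\alpha\}$ collects exactly the points where $1/V$ is smallest, any $A$ of the same measure satisfies $\int_A 1/V\ge\int_{\{V>\alpha\}}1/V$, and equimeasurability of $V$ and $V^*$ evaluates the right-hand side as $\int_0^m 1/V^*$.

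To pass from sets to the function $g$, I will use the layer-cake formula $g(x)=\int_0^\oo\chi_{\{g>\lambda\}}(x)\,d\lambda$, integrate the bathtub inequality over $\lambda$ with $m=\mu_g(\lambda)$, and apply Tonelli together with $\int_0^\oo\chi_{\{\mu_g(\lambda)>s\}}\,d\lambda=g^*(s)$ to recognize the outcome as $\int_0^\oo g^*(s)/V^*(s)\,ds$. The main obstacle is precisely this reverse inequality: unlike the first estimate, it is not of Hardy-Littlewood-Polya type and must be argued from the extremal (minimizing) property of the super-level sets of $V$. Care is also needed with the level-set technicalities when the distribution function of $V$ has jumps, so that an exact $\alpha$ with $|\{V>\alpha\}|=m$ may fail to exist, and with the extended-real conventions where $W$ vanishes or is infinite (equivalently $V=\oo$ or $V=0$), for which the definition $1/w=(1/W)^*$ taking values in $[0,\oo]$ is designed. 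Once the bathtub principle is secured, assembling the chain above completes the proof.
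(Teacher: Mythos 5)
Your proof is correct and takes essentially the same route as the paper's (which delegates to the proof of Theorem 1 in Benedetto--Heinig 2003): the Hardy--Littlewood--Polya inequality on the transform side combined with the reverse rearrangement inequality $\int_0^\infty f^*(s)^p w(s)\,ds\le\int_{\mathbb{R}^n}|f|^pW$ for $1/w=(1/W)^*$, applied through the identity $(|h|^r)^*=(h^*)^r$. Your bathtub-plus-layer-cake argument is a valid self-contained proof of that reverse inequality, and the level-set technicality you flag (no exact $\alpha$ with $|\{V>\alpha\}|=m$) is repaired routinely by choosing, via nonatomicity of Lebesgue measure, a set $A_0$ with $\{V>\alpha\}\subseteq A_0\subseteq\{V\ge\alpha\}$ and $|A_0|=m$, noting $\mu_g(\lambda)<\infty$ for $\lambda>0$ since $f\in L^1(\mathbb{R}^n)$.
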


Working with (\ref{Lorentz}) instead of (\ref{Lebesgue}) is the essence of the ``rearrangement''  approach to Fourier inequalities in weighted Lebesgue spaces. It has proven to be a powerful method but it is not the only approach. See, for example, \cites{SW, S3}.

The strategy we adopt for proving inequality (\ref{Lebesgue}) via inequality (\ref{Lorentz}) with monotone $u$ and $w$ begins by using the mapping properties of the Fourier transform to find weighted Hardy-type inequalities that imply (\ref{Lorentz}). This is done in Section \ref{R2H}. Then, known weight characterizations for the Hardy-type inequalities are employed to find sufficient conditions on the weights and indices for (\ref{Lorentz}) and hence for (\ref{Lebesgue}) to hold. This is carried out in Section \ref{EWC}.

The mapping properties we use here are universally known: The Fourier transform maps $L^1$ to $L^\oo$ and maps $L^2$ to $L^2$. In \cite{JT},  Jodiet and Torchinsky showed that for any operator with these mapping properties there exists a constant $D$ such that
\<\label{JT46}
\int_0^x(\hat f)^*(t)^2\,dt\le D\int_0^x\bigg(\int_0^{1/t}f^*\bigg)^2\,dt
\>
for all $f\in L^1+L^2$. 

The main Hardy inequality we use is the same one employed, either explicitly or implicitly, in the various 1983-4 papers \cites{BH,H1,JS1, JS2,M1,M2}:
\<
\label{Main Hardy}
\bigg(\int_0^\oo\bigg(\int_0^{1/t} f\bigg)^qu(t)\,dt\bigg)^{1/q}
\le C_1\bigg(\int_0^\oo f^pw\bigg)^{1/p},\qquad f\ge0.
\>
A characterization of weights for this inequality is easily derived from known results; details are in Proposition \ref{Main Hardy Weights} below. The case $p\le q$ illustrates the simple sufficient conditions that this method gives for the rearranged Fourier transform inequality (\ref{Lorentz}). The following result was (essentially) given in each of \cites{BH, BH2003, H1, JS2, M2}.
\begin{prop} \label{BH0} Let $u=U^*$ and $1/w=(1/W)^*$, and suppose $1<p\le q<\oo$. If
\<\label{BH0cond}
\sup_{x>0}\bigg(\int_0^{1/x}u\bigg)^{1/q}\bigg(\int_0^x w^{1-p'}\bigg)^{1/p'}<\oo
\>
then (\ref{Lorentz}) holds, and hence (\ref{Lebesgue}) holds. 
\end{prop}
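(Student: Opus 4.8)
The plan is to factor the rearranged Fourier inequality (\ref{Lorentz}) through the Hardy inequality (\ref{Main Hardy}), following the two-step scheme described in the introduction: first pass from the Fourier side to the Hardy functional $t\mapsto\int_0^{1/t}f^*$ using only the mapping properties of $\hat{\ }$, and then recognize (\ref{BH0cond}) as the known weight characterization of (\ref{Main Hardy}). Concretely, the first step is to establish
\[
\bigg(\int_0^\infty (\hat f)^*(t)^q u(t)\,dt\bigg)^{1/q}
\le C\bigg(\int_0^\infty\bigg(\int_0^{1/t}f^*\bigg)^q u(t)\,dt\bigg)^{1/q}.
\]
When $q=2$ this is immediate from the inequality (\ref{JT46}) of Jodiet and Torchinsky, which gives $\int_0^x(\hat f)^*(t)^2\,dt\le D\int_0^x\big(\int_0^{1/t}f^*\big)^2\,dt$ for every $x>0$; since $u=U^*$ is non-increasing it is a superposition of the characteristic functions $\chi_{(0,x)}$, so multiplying these truncated inequalities by $u$ and integrating (an application of Hardy's Lemma) yields the weighted inequality. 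For general $q$ the same reduction is carried out in Section \ref{R2H}.

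For the second step I would apply (\ref{Main Hardy}) to the non-increasing function $f^*$ in place of $f$, obtaining
\[
\bigg(\int_0^\infty\bigg(\int_0^{1/t}f^*\bigg)^q u(t)\,dt\bigg)^{1/q}
\le C_1\bigg(\int_0^\infty f^*(s)^p w(s)\,ds\bigg)^{1/p},
\]
so that it remains only to see that (\ref{BH0cond}) guarantees $C_1<\infty$. The substitution $t\mapsto 1/t$ turns $f\mapsto\int_0^{1/t}f$ into the ordinary Hardy operator, whose boundedness from $L^p_w$ into $L^q_u$ for $1<p\le q<\infty$ is characterized by the Muckenhoupt--Bradley condition $\sup_{x>0}\big(\int_0^{1/x}u\big)^{1/q}\big(\int_0^x w^{1-p'}\big)^{1/p'}<\infty$; this is exactly (\ref{BH0cond}), and is the content of Proposition \ref{Main Hardy Weights}. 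Chaining the two displays gives (\ref{Lorentz}) with constant $CC_1$, and Proposition \ref{Lorentz implies Lebesgue} then delivers (\ref{Lebesgue}).

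The step I expect to be the main obstacle is the reduction of the first step for general $q$. The tempting pointwise estimate $(\hat f)^*(t)\le C\int_0^{1/t}f^*$ is \emph{false}: splitting $f$ at its level $f^*(1/t)$ and using $L^1\to L^\infty$ on the large part and $L^2\to L^2$ on the small part produces, besides $\int_0^{1/t}f^*$, a tail term of order $t^{-1/2}\big(\int_{1/t}^\infty (f^*)^2\big)^{1/2}$ that $\int_0^{1/t}f^*$ does not dominate. One is therefore forced to argue through the integrated inequality (\ref{JT46}) together with the monotonicity of $u$ rather than pointwise, and to push the exponent in (\ref{JT46}) from $2$ up to a general $q$. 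Verifying that in the range $p\le q$ this reduction leaves only the single condition (\ref{BH0cond}) to check — in contrast to the more delicate $q<p$ situation flagged in the abstract — is the real content of the argument.
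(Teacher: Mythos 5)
Your second step is sound, and in the range $q\ge2$ your first step is exactly \cite{JT}*{Theorem 4.7}: this is how the paper itself argues (see the proof of part \ref{1} of Theorem \ref{Thm1}), and chaining it with Proposition \ref{Main Hardy Weights}(a) — the Muckenhoupt--Bradley characterization, which is precisely (\ref{BH0cond}) — gives the proposition for $q\ge2$. Your $q=2$ derivation from (\ref{JT46}) via the monotonicity of $u$ is also correct. The genuine gap is the subrange $1<q<2$, which the hypothesis $1<p\le q$ certainly allows. There your first-step display
\[
\bigg(\int_0^\infty (\hat f)^*(t)^qu(t)\,dt\bigg)^{1/q}
\le C\bigg(\int_0^\infty\bigg(\int_0^{1/t}f^*\bigg)^qu(t)\,dt\bigg)^{1/q}
\]
is \emph{false} in general, even for non-increasing $u$, and the paper itself proves this: in Example \ref{Ex2} one has $1<q<2$, condition (\ref{MH}) holds, hence (\ref{Main Hardy}) holds, and yet (\ref{Lorentz}) fails. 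Since your display involves only $u$, $q$, and the mapping properties of the Fourier transform — neither $p$ nor $w$ enters — if it held for the weight $u$ of that example with any finite constant, applying (\ref{Main Hardy}) to $f^*$ and chaining would yield (\ref{Lorentz}), a contradiction. So the exponent in (\ref{JT46}) can only be pushed \emph{up} from $2$, never down; that a reduction to (\ref{Main Hardy}) for $q<2$ requires extra hypotheses on $u$ is the entire point of conditions \ref{3}--\ref{5} of Theorem \ref{Thm1} and of Section \ref{counterexamples}. Citing ``the same reduction is carried out in Section \ref{R2H}'' does not repair this, because Section \ref{R2H} carries it out only under those extra hypotheses.

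The missing idea for $1<q<2$ is duality, and it is available precisely because $p\le q<2$ forces $p'\ge q'>2$, i.e.\ $\max(p',q)\ge2$ holds automatically when $p\le q$ — the paper's recurring theme, and the reason the proposition fails to extend to $1<q<2<p$. Under the replacements $p\mapsto q'$, $q\mapsto p'$, $u\mapsto w^{1-p'}$, $w\mapsto u^{1-q'}$ (computed before Lemma \ref{LD}), the new domain weight $w^{1-p'}$ is non-increasing, the new target exponent is $p'>2$, and since $(1-q')(1-q)=1$ the condition (\ref{BH0cond}) transforms into itself after $x\mapsto1/x$. One then runs your $q\ge2$ argument on the dual data, and Lemmas \ref{MHD} and \ref{LD} transfer (\ref{Main Hardy}) over and return (\ref{Lebesgue}); this is exactly the mechanism in the proof of part \ref{2} of Theorem \ref{Thm1}. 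Note that this route proves (\ref{Lebesgue}) through the \emph{dual} rearranged inequality, not through your display, so for $q<2$ your first step must be replaced, not merely reproved.
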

Throughout, conjugate indices are denoted with a prime, so $1/p+1/p'=1$. See \cite{BH2003} for estimates of the best constant $C$ in (\ref{Lebesgue}), depending only on the indices $p$, $q$, the constant $D$ from (\ref{JT46}), and the value of the above supremum. 

In Section \ref{Comp14} we complete a comparison begun in \cite{BH2003} of two theorems proved there. Both give sufficient conditions for (\ref{Lebesgue}), but the form of the conditions differ. We show that one of the two includes the results of the other.

When we are using (\ref{Lorentz}) only to get to (\ref{Lebesgue}) we may assume that $u$ is non-increasing and $w$ is non-decreasing. Indeed, the monotonicity of the weights will figure prominently in our analysis. But the inequality (\ref{Lorentz}) is of interest in its own right, without the monotonicity restrictions on $u$ and $w$. It expresses the boundedness of the Fourier transform between weighted Lorentz $\Lambda$-spaces and has been studied in \cite{BH2003}. Work on the corresponding inequality for Lorentz $\Gamma$-spaces in \cites{S1,S2} resulted in necessary and sufficient conditions and was extended to the Fourier coefficient map in \cites{R, RS}. We introduce the Lorentz $\Lambda$- and $\Gamma$-spaces, and improve (weaken) the known sufficient conditions for (\ref{Lorentz}), using $\Gamma$-space techniques, in Section \ref{Lambda}. 

Section \ref{Other} looks at extending the results of the previous sections to Fourier transforms of functions on groups other than $\mathbb R^n$.

An example is given in Section \ref{counterexamples} to show that the expected sufficient condition for (\ref{Lebesgue}), known to be valid when $2<q<p$ and $1<q<p<2$, fails to be sufficient when $1<q<2<p$. 

Although, in (\ref{Lebesgue}), we consider all positive values of the index $q$, we restrict our attention to $p>1$. In fact, the case $p=1$ of (\ref{Lebesgue}) is rarely discussed, because of the following simple argument. With $C=(1/\operatorname{ess\,inf} W)(\int_{\mathbb R^n}U)^q$, the trivial estimate $|\hat f|\le\int_{\mathbb R^n}|f|$ yields (\ref{Lebesgue}). On the other hand, for $L^1$ functions $f$ approaching a point mass at $x$ (in a suitable weak sense) $|\hat f|$ approaches a constant function. Applying (\ref{Lebesgue}) to such $f$ yields $(\int_{\mathbb R^n}U)^q\le C w(x)$. Taking the essential infimum over $x$ now shows that $C=(1/\operatorname{ess\,inf} W)(\int_{\mathbb R^n}U)^q$ is best possible in (\ref{Lebesgue}). 

A variation of this argument shows that if $0<p<1$ then (\ref{Lebesgue}) holds only if either $U$ is almost everywhere zero or $W$ is almost everywhere infinite. 

\section{Reduction to Hardy-type inequalities}\label{R2H}

The success of the rearrangement approach to (\ref{Lebesgue}) in the case $1<p\le q<\oo$ is well known. But methods and expectations change when $q<p$, because
arguments based on the Fourier transform being of type $(p,q)=(1,\oo)$ and $(p,q)=(2,2)$ lend themselves most naturally to the case $p\le q$. Most authors were content to consider only that case. However, in \cites{H2, BHJ} a general estimate due to Calder\'on, based on weak-type mapping properties instead of the strong-type mapping properties mentioned above, was successfully used to give sufficient conditions for the Fourier inequality (\ref{Lebesgue}) when $1<q<p$. 

Later, in \cite{BH2003}, the strong-type conditions, combined with a duality argument, provided improved sufficient conditions using a reduction to the Hardy inequality (\ref{Main Hardy}). However,  the argument that provided these improved conditions is only applicable under the restriction $\max(p',q)\ge2$. This restriction is used implicitly in the proofs but, unfortunately, was not included in the statements of \cite{BH2003}*{Theorems 1 and 4}. In Section \ref{counterexamples} we demonstrate that the restriction is essential by giving an example to show that the statements of \cite{BH2003}*{Theorems 1 and 4} may fail in the case $\max(p',q)<2$. 

In our first main result, we give simple a priori conditions on indices and weights that enable the reduction to (\ref{Main Hardy}) go through even when the restriction $\max(p',q)\ge2$ does not hold. 
Note that parts \ref{1} and \ref{2} are contained in the proof of \cite{BH2003}*{Theorem 1}. Parts \ref{5} and \ref{8} were suggested by \cite{JS1}*{Corollary 3}. For parts \ref{4} and \ref{7} we need to define the weight condition $B_p$ for $p>0$: A non-negative function $f$ is in $B_p$ provided there exists a constant $\beta<\oo$
\[
y^p\int_y^\oo f(x)\,\frac{dx}{x^p}\le\beta\int_0^y f(x)\,dx,\quad y>0.
\]

\begin{thm} \label{Thm1} Let $u=U^*$ and $1/w=(1/W)^*$. Suppose $1<p<\oo$ and $0<q<p$, and suppose that the inequality (\ref{Main Hardy}) holds for some constant $C_1$. If any one of the following conditions holds, then so does the Fourier inequality (\ref{Lebesgue}).
\begin{enumerate}[label=\rm{(\alph*)}]
\item\label{1} $q\ge2$;
\item\label{2} $q>1$ and $p\le2$;
\item\label{3} there exists a constant $\beta$ such that for all $y>0$;
\[
y^{q/2}\int_y^\oo u(x)\,\frac{dx}{x^{q/2}}
\le \beta\bigg(\int_0^y u(x)\,dx+y^{\max(1,q)}\int_y^\oo u(x)\,\frac{dx}{x^{\max(1,q)}}\bigg);
\]
\item\label{4} $u\in B_{q/2}$;
\item\label{5} $q>1$ and $t^{2-q}u(t)$ is a decreasing function of $t$;
\item\label{6} $q>1$ and there exists a constant $\beta$ such that for all $y>0$,
\[
y^{p'/2}\int_y^\oo w(x)^{1-p'}\,\frac{dx}{x^{p'/2}}
\le \beta\bigg(\int_0^y w(x)^{1-p'}\,dx+y^{p'}\int_y^\oo w(x)^{1-p'}\,\frac{dx}{x^{p'}}\bigg);
\]
\item\label{7} $q>1$ and $w^{1-p'}\in B_{p'/2}$;
\item\label{8} $q>1$ and $t^{2-p'}w(t)^{1-p'}$ is a decreasing function of $t$.
\end{enumerate}
\end{thm}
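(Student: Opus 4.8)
The plan is to prove the rearranged inequality~(\ref{Lorentz}), since by Proposition~\ref{Lorentz implies Lebesgue} this yields~(\ref{Lebesgue}). Write $g=(\hat f)^*$ and $h(t)=\int_0^{1/t}f^*$; both are non-increasing, and the Jodiet--Torchinsky inequality~(\ref{JT46}) is exactly the statement $\int_0^x g^2\le D\int_0^x h^2$ for every $x>0$. On the other hand, applying the assumed Hardy inequality~(\ref{Main Hardy}) to $f^*$ gives $(\int_0^\infty h^q u)^{1/q}\le C_1(\int_0^\infty f^{*p}w)^{1/p}$. Consequently it suffices to establish the \emph{transfer inequality}
\[
\int_0^\infty g^q u\le C\int_0^\infty h^q u
\]
for all non-increasing $g,h\ge0$ satisfying $\int_0^x g^2\le D\int_0^x h^2$: chaining it with~(\ref{Main Hardy}) produces~(\ref{Lorentz}) and hence~(\ref{Lebesgue}).

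Part~\ref{1} ($q\ge2$) is the clean case. The hypothesis $\int_0^x g^2\le D\int_0^x h^2$ is equivalent, by the representation of non-increasing functions as superpositions of the indicators $\chi_{(0,x)}$ (a restatement of Hardy's Lemma), to $\int_0^\infty g^2\psi\le D\int_0^\infty h^2\psi$ for every non-negative non-increasing $\psi$. When $q\ge2$ the function $\psi=g^{q-2}u$ is non-increasing, hence admissible, and gives $\int_0^\infty g^q u\le D\int_0^\infty h^2 g^{q-2}u$; H\"older's inequality with exponents $q/2$ and $q/(q-2)$ then yields $\int g^q u\le D(\int h^q u)^{2/q}(\int g^q u)^{(q-2)/q}$, which after a routine truncation ensuring finiteness rearranges to $\int_0^\infty g^q u\le D^{q/2}\int_0^\infty h^q u$.

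For $q<2$ the choice $\psi=g^{q-2}u$ is increasing and this device fails; circumventing the resulting loss is the main obstacle. I would first record the reductions among conditions, so that only~\ref{3} and~\ref{6} need be proved directly: the implications \ref{4}$\Rightarrow$\ref{3} and \ref{7}$\Rightarrow$\ref{6} are immediate, since~\ref{3} (resp.~\ref{6}) only enlarges the right-hand side of~\ref{4} (resp.~\ref{7}); and \ref{5}$\Rightarrow$\ref{4}, \ref{8}$\Rightarrow$\ref{7} follow from an elementary estimate in which the convergence of $\int_0^y x^{q-2}\,dx$ forces the hypothesis $q>1$ of~\ref{5} (the analogous integral $\int_0^y x^{p'-2}\,dx$ for~\ref{8} converges automatically since $p'>1$). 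Under~\ref{4} the crude pointwise bound $g(t)^2\le t^{-1}\int_0^t g^2\le Dt^{-1}\int_0^t h^2$ already reduces the transfer inequality to a weighted Hardy inequality for the averaging operator $b\mapsto t^{-1}\int_0^t b$ on the non-increasing function $b=h^2$ with power $q/2<1$. Proving the theorem under the weaker condition~\ref{3} is the crux: there the crude bound is too lossy and one must retain the full majorization, so the transfer inequality becomes a weighted comparison of $\int a^{q/2}u$ with $\int b^{q/2}u$ for non-increasing $a=g^2$, $b=h^2$ with $\int_0^x a\le D\int_0^x b$. The concavity of $s\mapsto s^{q/2}$ defeats the H\"older argument of part~\ref{1}, and I expect to appeal to the known weight characterisations of Hardy inequalities on monotone functions, of which~\ref{3} is the relevant instance.

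The remaining parts~\ref{2} and~\ref{6}, both requiring $q>1$, I would obtain by duality from parts~\ref{1} and~\ref{3}. Since the Fourier transform is essentially self-adjoint, for $p,q>1$ inequality~(\ref{Lebesgue}) for $(p,q,U,W)$ is equivalent to~(\ref{Lebesgue}) for the indices $(q',p')$ with weights $(W^{1-p'},U^{1-q'})$. As rearrangement commutes with positive powers, the weights of the dual problem are, after rearrangement, $w^{1-p'}$ in the role of $u$ and $u^{1-q'}$ in the role of $w$; and because the operator in~(\ref{Main Hardy}) has the symmetric kernel $\chi_{\{st<1\}}$, its self-adjointness shows the dual~(\ref{Main Hardy}) holds with the same constant $C_1$. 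Under this correspondence the exponent $q$ is replaced by $p'$ and the weight $u$ by $w^{1-p'}$: the hypothesis $p\le2$ of~\ref{2} becomes $p'\ge2$, reducing it to part~\ref{1}, while the hypothesis~\ref{6} on $w$ becomes precisely condition~\ref{3} on the dual weight $w^{1-p'}$, reducing it to the case already treated. The assumption $q>1$ in~\ref{2} and~\ref{6} is exactly what makes this duality available.
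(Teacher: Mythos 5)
Your overall architecture matches the paper's: reduce to (\ref{Lorentz}) via Proposition \ref{Lorentz implies Lebesgue}, feed the Jodeit--Torchinsky estimate (\ref{JT46}) into a comparison with $h(t)=\int_0^{1/t}f^*$, prove part \ref{1} directly, and obtain parts \ref{2} and \ref{6} by duality (your duality argument is exactly the paper's Lemmas \ref{LD} and \ref{MHD}, including the self-adjointness of the kernel $\chi_{\{st<1\}}$), with the condition implications \ref{4}$\Rightarrow$\ref{3}, \ref{5}$\Rightarrow$\ref{4} as in the paper. Your proof of part \ref{1} is in effect a proof of the result the paper simply cites as \cite{JT}*{Theorem 4.7}, and it is correct. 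Your route to part \ref{4} --- the crude bound $g(t)^2\le Dt^{-1}\int_0^t h^2$ followed by boundedness of the averaging operator on the decreasing cone under $u\in B_{q/2}$ --- is a legitimate alternative to the paper's computation, with one caveat: here $q/2<1$, so you need the sub-unity-exponent version of that theorem (due to Carro and Soria), not the statement of \cite{AM} invoked elsewhere in the paper, which concerns exponents at least one.

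The genuine gap is part \ref{3}, which you correctly identify as the crux and then leave to ``known weight characterisations of Hardy inequalities on monotone functions.'' No such characterisation exists for condition \ref{3}, and worse, the transfer lemma you propose to prove --- $\int_0^\infty g^qu\le C\int_0^\infty h^qu$ for \emph{all} non-increasing pairs with $\int_0^xg^2\le D\int_0^xh^2$, assuming only condition \ref{3} on $u$ --- is false as stated. Take $0<q\le1$, so $\max(1,q)=1$, and $u\equiv1$: then $y\int_y^\infty u(x)x^{-1}\,dx=\infty$ for every $y$, so condition \ref{3} holds vacuously; yet for $g=\sqrt{\epsilon}\,\chi_{(0,1/\epsilon)}$ and $h=\chi_{(0,1)}$ one has $\int_0^xg^2=\epsilon\min(x,1/\epsilon)\le\min(x,1)=\int_0^xh^2$ while $\int_0^\infty g^qu=\epsilon^{q/2-1}\to\infty$ and $\int_0^\infty h^qu=1$. (Such weights make the hypothesis (\ref{Main Hardy}) degenerate, which is why the theorem survives --- but that degeneracy is nowhere in your lemma, so your lemma, and with it your plan, cannot be executed as written.) The paper's proof succeeds precisely because it does \emph{not} pass to the full decreasing cone: the specific function $h(t)=\int_0^{1/t}f^*$ satisfies the extra structural property that $t\,h(t)$ is non-decreasing (note $t\chi_{(0,1)}(t)$ is not, so the counterexample above is excluded), and the known monotone-cone condition at exponent $q/2$ is $B_{q/2}$, i.e., your part \ref{4}, which is strictly weaker than \ref{3} exactly when $0<q\le1$. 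Concretely, the paper proves (\ref{CompoundHardy}) (then invokes Lemma \ref{CompoundHardyLemma}) by setting $F(t)=-\big(\int_0^{1/t}f^*\big)^q$, verifying that $t^{1+\alpha}F'(t)$ is non-decreasing with $\alpha=\max(1,q)$ --- this is where the monotonicity of $th(t)$ and of $f^*(1/t)$ enter --- and then combining condition \ref{3} with Hardy's Lemma and Minkowski's integral inequality with index $2/q$; the term $y^{\max(1,q)}\int_y^\infty u(x)x^{-\max(1,q)}\,dx$ in \ref{3} is the signature of this smaller cone. One partial salvage worth noting: when $q>1$ and $u$ is non-increasing, $y^q\int_y^\infty u(x)x^{-q}\,dx\le(q-1)^{-1}yu(y)\le(q-1)^{-1}\int_0^yu$, so condition \ref{3} collapses to $u\in B_{q/2}$ and your averaging-operator argument does close that subcase (and, dually, \ref{6} collapses to \ref{7}); the irreducible failure of your plan is part \ref{3} with $0<q\le1$, the only regime in which \ref{3} genuinely says more than \ref{4}.
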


Explicit estimates of the constant $C$ in (\ref{Lebesgue}) may be given in terms of the indices $p$ and $q$, and the constants $D$, $\beta$, and $C_1$. We omit the details.

If none of these a priori conditions holds, then some additional conditions are needed to ensure the validity of (\ref{Lebesgue}). We approach the problem by adding another Hardy-type inequality to (\ref{Main Hardy}) so that together the two imply (\ref{Lorentz}). This has already been done, in \cites{H2, BHJ}, but it was based on weaker mapping properties of the Fourier transform than we use here and, predictably, gives a more restrictive sufficient condition than we obtain using the strong mapping properties. It is included, as (\ref{BHJ Hardy}), because it leads to a more tractable weight condition.

\begin{thm}\label{Thm2} Let $u=U^*$ and $1/w=(1/W)^*$. Suppose $1<p<\oo$ and $0<q<p$, and suppose that the inequality (\ref{Main Hardy}) holds. If either of the following inequalities also holds, for all $f\in L^1(\mathbb R^n)$, then so does the rearranged Fourier inequality (\ref{Lorentz}) and hence also the Fourier inequality (\ref{Lebesgue}):
\<
\label{BHJ Hardy}
\bigg(\int_0^\oo\bigg(x^{-1/2}\int_{1/x}^\oo t^{-1/2}f^*(t)\,dt\bigg)^qu(x)\,dx\bigg)^{1/q}
\le C_2\bigg(\int_0^\oo (f^*)^pw\bigg)^{1/p}.
\>
\<
\label{New Hardy}
\bigg(\int_0^\oo\bigg(\frac1x\int_{1/x}^\oo (f^*)^2\bigg)^{q/2}u(x)\,dx\bigg)^{1/q}
\le C_3\bigg(\int_0^\oo (f^*)^pw\bigg)^{1/p}.
\>
\end{thm}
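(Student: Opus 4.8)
The plan is to derive both implications from a single pointwise estimate on $(\hat f)^*$ obtained from the strong-type inequality (\ref{JT46}), and then to feed the two resulting terms separately into (\ref{Main Hardy}) and into whichever of (\ref{BHJ Hardy}), (\ref{New Hardy}) is assumed. The two cases share the same skeleton and differ only in the final estimate.

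First I would exploit the monotonicity of $(\hat f)^*$: since $(\hat f)^*$ is non-increasing, $x(\hat f)^*(x)^2\le\int_0^x(\hat f)^*(t)^2\,dt$, so (\ref{JT46}) gives $x(\hat f)^*(x)^2\le D\int_0^x(\int_0^{1/t}f^*)^2\,dt$. For fixed $x$ I split the inner integral at $1/x$, writing $\int_0^{1/t}f^*=\int_0^{1/x}f^*+\int_{1/x}^{1/t}f^*$ for $t\in(0,x)$ and using $(a+b)^2\le2a^2+2b^2$. The first part contributes $2Dx(\int_0^{1/x}f^*)^2$, which after dividing by $x$ is exactly $2D$ times the square of the integrand of (\ref{Main Hardy}) evaluated at $f^*$. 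For the second part I substitute $r=1/t$ to obtain $\frac{2D}{x}\int_{1/x}^\infty(\int_{1/x}^r f^*)^2 r^{-2}\,dr$, and the whole problem reduces to bounding this quantity by the square of the integrand of one of the two assumed Hardy inequalities.

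The crux is this last estimate, and two routes are available. The ordinary weighted Hardy inequality with exponent $2$ gives $\int_{1/x}^\infty(\int_{1/x}^r f^*)^2 r^{-2}\,dr\le4\int_{1/x}^\infty(f^*)^2$, which produces $4$ times the squared integrand of (\ref{New Hardy}). To reach the integrand of (\ref{BHJ Hardy}) instead, I would write $\int_{1/x}^r f^*=\int_{1/x}^\infty\chi_{[1/x,r]}(s)f^*(s)\,ds$ and apply Minkowski's integral inequality in $L^2(r^{-2}\,dr)$; since $\|\chi_{[s,\infty)}\|_{L^2(r^{-2}\,dr)}=s^{-1/2}$ for $s\ge1/x$, this yields $\int_{1/x}^\infty(\int_{1/x}^r f^*)^2 r^{-2}\,dr\le(\int_{1/x}^\infty s^{-1/2}f^*(s)\,ds)^2$. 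I expect this Minkowski step to be the main obstacle, because the naive route through Cauchy--Schwarz only gives $\int_{1/x}^r f^*\le r^{1/2}\int_{1/x}^\infty s^{-1/2}f^*$, whose contribution to the $r$-integral diverges logarithmically; the gain encoded in Minkowski's inequality is precisely what removes this divergence.

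In either case I arrive at a pointwise bound of the form $(\hat f)^*(x)\le C(\int_0^{1/x}f^*+R_i(x))$, where $R_1$ and $R_2$ are the integrands appearing in (\ref{BHJ Hardy}) and (\ref{New Hardy}) respectively. Raising to the power $q$, integrating against $u$, and using subadditivity of $t\mapsto t^q$ when $0<q\le1$ together with the triangle inequality in $L^q(u)$ when $q\ge1$, I split the left side of (\ref{Lorentz}) into two pieces (the right-hand side of (\ref{Lorentz}) may be assumed finite, or there is nothing to prove). The first piece is controlled by (\ref{Main Hardy}) applied to $f^*$, and the second by whichever of (\ref{BHJ Hardy}), (\ref{New Hardy}) is in force, each bounded by a multiple of $(\int_0^\infty(f^*)^pw)^{1/p}$. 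This establishes (\ref{Lorentz}), and Proposition \ref{Lorentz implies Lebesgue} then delivers (\ref{Lebesgue}).
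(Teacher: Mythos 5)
Your proposal is correct and follows essentially the same route as the paper: both arguments rest on the pointwise bound $(\hat f)^*(x)^2\le \frac Dx\int_0^x\big(\int_0^{1/t}f^*\big)^2\,dt$ (Lemma \ref{CompoundHardyLemma}), split the inner integral at $t=1/x$, control the tail piece by the Hardy inequality on $(1/x,\oo)$ to reach the integrand of (\ref{New Hardy}), and then combine the two pieces in $L^q(u)$ with the constant $\max(2^{1/q-1},1)$. The only organizational difference is in the case of (\ref{BHJ Hardy}): the paper uses Minkowski's integral inequality to show that (\ref{Main Hardy}) and (\ref{BHJ Hardy}) together imply (\ref{New Hardy}) and then invokes the other case, whereas you apply the same Minkowski computation (in $L^2(r^{-2}dr)$, with $\|\chi_{[s,\oo)}\|=s^{-1/2}$) directly to the tail term to reach the integrand of (\ref{BHJ Hardy}) in parallel --- an interchangeable rearrangement of the same ingredients.
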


Once again, we omit the details of the available estimates for the constant $C$ in (\ref{Lebesgue}) in terms of the indices $p$ and $q$ and the constants $D$, $C_1$, and $C_2$ or $C_3$.

Before proceeding to the proofs of these two theorems a discussion of duality is needed. We show that, when both $p>1$ and $q>1$, each of the inequalities (\ref{Lebesgue}) and (\ref{Main Hardy}) holds if and only if its counterpart, obtained by the replacements $p\mapsto q'$, $q\mapsto p'$, $U\mapsto W^{1-p'}$ and $W\mapsto U^{1-q'}$, also holds. Observe that positive exponents commute with the rearrangement, so
\[
u=U^*\mapsto(W^{1-p'})^*=((1/W)^*)^{p'-1}=(1/w)^{p'-1}=w^{1-p'}
\]
and
\[
1/w=(1/W)^*\mapsto(1/U^{1-q'})^*=(U^*)^{q'-1}=1/u^{1-q'}.
\] 
Thus, the replacements above imply $u\mapsto w^{1-p'}$ and $w\mapsto u^{1-q'}$.

\begin{lem}\label{LD} If $1<q<\oo$ and $1<p<\oo$ then for any weights $U$ and $W$, and any constant $C$, (\ref{Lebesgue}) holds for all $f\in L^1(\mathbb R^n)$ if and only if 
\<\label{Lebesgue Dual}
\bigg(\int_{\mathbb R^n}|\hat g|^{p'}W^{1-p'}\bigg)^{1/p'}\le C\bigg(\int_{\mathbb R^n}|g|^{q'}U^{1-q'}\bigg)^{1/q'}
\>
for all $g\in L^1(\mathbb R^n)$.
\end{lem}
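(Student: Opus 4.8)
The plan is to prove the equivalence of the Fourier inequality (\ref{Lebesgue}) and its dual (\ref{Lebesgue Dual}) by a standard duality argument for the Fourier transform, exploiting the self-adjointness relation $\int \hat f\,g=\int f\,\hat g$ (the multiplication formula) together with the duality of weighted Lebesgue norms. Let me sketch how I would organize this.

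First I would recall the duality characterization of the weighted $L^q_U$ norm. For $1<q<\oo$, writing $q'$ for the conjugate index, one has
\[
\Bigl(\int_{\mathbb R^n}|F|^qU\Bigr)^{1/q}
=\sup\Bigl\{\int_{\mathbb R^n}|F|\,|h| : \Bigl(\int_{\mathbb R^n}|h|^{q'}U^{1-q'}\Bigr)^{1/q'}\le1\Bigr\},
\]
which is the weighted version of the usual $L^q$--$L^{q'}$ duality obtained by setting $|h|=|F|^{q-1}U$ (up to normalization). Then (\ref{Lebesgue}) says exactly that the bilinear form $(f,h)\mapsto\int|\hat f|\,|h|$ is bounded by $C\,\|f\|_{L^p_W}$ when $h$ runs over the unit ball of $L^{q'}_{U^{1-q'}}$.

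The central step is to convert the pairing $\int\hat f\,g$ into the pairing $\int f\,\hat g$. This is where the \emph{multiplication formula} $\int_{\mathbb R^n}\hat f(y)g(y)\,dy=\int_{\mathbb R^n}f(x)\hat g(x)\,dx$ enters; it holds for $f,g\in L^1(\mathbb R^n)$ and lets me transfer the transform from one factor to the other. Starting from (\ref{Lebesgue}), I would take a test function $g$, dominate $\bigl|\int\hat f\,g\bigr|\le\int|\hat f||g|$, apply the duality description above with $F=\hat f$ to get $\int|\hat f||g|\le C\|f\|_{L^p_W}\bigl(\int|g|^{q'}U^{1-q'}\bigr)^{1/q'}$, and then swap the transform via the multiplication formula to replace $\int\hat f\,g$ by $\int f\,\hat g$. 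Taking the supremum over all $f$ in the unit ball of $L^p_W$ and using the dual description of the $L^{p'}_{W^{1-p'}}$ norm of $\hat g$ then yields (\ref{Lebesgue Dual}); the reverse implication is entirely symmetric because the replacement $p\mapsto q'$, $q\mapsto p'$, $U\mapsto W^{1-p'}$, $W\mapsto U^{1-q'}$ is an involution (one checks $W^{1-p'}\mapsto U^{1-q'}$ back to the original data), so applying the forward direction to (\ref{Lebesgue Dual}) returns (\ref{Lebesgue}).

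The main obstacle I anticipate is not the formal duality but the \emph{density and integrability bookkeeping} needed to make the argument rigorous. The inequalities are stated only for $f,g\in L^1(\mathbb R^n)$, yet the duality suprema above are naturally taken over $L^{q'}_{U^{1-q'}}$ and $L^{p'}_{W^{1-p'}}$, so I must verify that $L^1$ functions form a sufficiently rich (dense) subclass to realize these suprema, and that the multiplication formula applies to the chosen test functions. The cleanest route is to establish the supremum characterizations using simple or compactly supported functions, which lie in $L^1$, and to approximate so that both $\int|\hat f||g|$ and $\int f\,\hat g$ make sense and the extremizing sequences can be taken in $L^1$. I would also need to handle the degenerate cases where the weights vanish or are infinite on sets of positive measure, restricting the relevant integrals to the sets where $U$ and $W^{1-p'}$ are finite and positive; there the duality is standard, and the excluded sets contribute nothing. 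Once these measure-theoretic points are settled, the equivalence follows immediately from the symmetry of the multiplication formula and the conjugacy of the exponents.
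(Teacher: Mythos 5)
Your proposal is correct and takes essentially the same route as the paper's proof: H\"older's inequality in the weighted pairing, the multiplication formula $\int_{\mathbb R^n}\hat f g=\int_{\mathbb R^n} f\hat g$, a supremum over the unit ball of $L^p_W$ together with the density of $L^1(\mathbb R^n)$ in $L^p_W(\mathbb R^n)$, and symmetry for the converse. Your observation that the replacement $p\mapsto q'$, $q\mapsto p'$, $U\mapsto W^{1-p'}$, $W\mapsto U^{1-q'}$ is an involution is precisely what underlies the paper's remark that ``the reverse implication is proved similarly.''
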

\begin{proof} Suppose $(\ref{Lebesgue})$ holds with constant $C$ and fix $g\in L^1(\mathbb R^n)$. For any $f\in L^1(\mathbb R^n)$ with $\int_{\mathbb R^n} |f|^pW\le 1$, we have $(\int_{\mathbb R^n}|\hat f|^q U)^{1/q}\le C$ so H\"older's inequality yields,
\[
\bigg|\int_{\mathbb R^n}f\hat g\bigg|
=\bigg|\int_{\mathbb R^n}\hat fg\bigg|
\le C\bigg(\int_{\mathbb R^n}|g|^{q'}U^{1-q'}\bigg)^{1/q'}.
\]
Taking the supremum over all such $f$, and using the density of $L^1(\mathbb R^n)$ in $L^p_W(\mathbb R^n)$, gives (\ref{Lebesgue Dual}). The reverse implication is proved similarly.\end{proof}
\begin{lem}\label{MHD} If $1<q<\oo$ and $1<p<\oo$ then for any weights $u$ and $w$, and any constant $C_1$, (\ref{Main Hardy}) holds for all non-negative measurable $f$ on $(0,\oo)$ if and only if 
\<\label{Main Hardy Dual}
\bigg(\int_0^\oo\bigg(\int_0^{1/x} g\bigg)^{p'}w(x)^{1-p'}\,dx\bigg)^{1/p'}
\le C_1\bigg(\int_0^\oo g^{q'}u^{1-q'}\bigg)^{1/q'}
\>
for all non-negative measurable $g$ on $(0,\oo)$.
\end{lem}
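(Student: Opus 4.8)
The plan is to mirror the proof of Lemma \ref{LD}, exploiting the fact that the Hardy operator $Tf(x)=\int_0^{1/x}f$ appearing in both (\ref{Main Hardy}) and (\ref{Main Hardy Dual}) is formally self-adjoint with respect to Lebesgue measure. Indeed, since the integrands are non-negative, a single application of Tonelli's theorem shows that for non-negative $f,g$,
\[
\int_0^\oo \Big(\int_0^{1/x} f\Big) g(x)\,dx = \int_0^\oo \Big(\int_0^{1/s} g\Big) f(s)\,ds,
\]
because the region $\{0<s<1/x\}$ coincides with $\{0<x<1/s\}$. This self-adjointness is exactly what makes (\ref{Main Hardy Dual}) the dual of (\ref{Main Hardy}). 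In fact, (\ref{Main Hardy Dual}) is nothing but (\ref{Main Hardy}) after the substitutions $p\mapsto q'$, $q\mapsto p'$, $u\mapsto w^{1-p'}$, $w\mapsto u^{1-q'}$ discussed before Lemma \ref{LD}, and a short computation shows these substitutions are involutive; hence it suffices to establish a single implication in full generality.

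First I would assume (\ref{Main Hardy}) holds with constant $C_1$ and fix a non-negative $g$. For any non-negative $f$ with $\int_0^\oo f^p w\le1$, inequality (\ref{Main Hardy}) gives $(\int_0^\oo (Tf)^q u)^{1/q}\le C_1$, so splitting $g=(gu^{-1/q})u^{1/q}$ and applying H\"older's inequality yields
\[
\int_0^\oo \Big(\int_0^{1/x} f\Big) g\,dx \le C_1\Big(\int_0^\oo g^{q'} u^{1-q'}\Big)^{1/q'},
\]
where the exponent $1-q'$ arises from the identity $-q'/q=1-q'$. Applying the Fubini identity above to the left-hand side rewrites it as $\int_0^\oo (\int_0^{1/s} g)\,f(s)\,ds$, a linear functional of $f$ bounded uniformly over the unit ball of $L^p_w$. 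Taking the supremum over all non-negative $f$ with $\int_0^\oo f^p w\le1$ then identifies the left side with the $L^p_w$-dual norm of $\int_0^{1/\cdot}g$, which by the same weight bookkeeping equals $\big(\int_0^\oo (\int_0^{1/x} g)^{p'}w^{1-p'}\big)^{1/p'}$; this is precisely (\ref{Main Hardy Dual}). The reverse implication follows by symmetry, applying the same argument to the dual inequality and invoking involutivity of the substitutions.

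The only points requiring care — and the nearest thing to an obstacle — are the standard duality facts for weighted Lebesgue spaces: one must confirm that, under the Lebesgue pairing, the dual norm of $L^q_u$ is computed with weight $u^{1-q'}$ and exponent $q'$ (valid since $1<q<\oo$), that the supremum over non-negative $f$ recovers the full dual norm because $Tg\ge0$, and that the degenerate values $0$ and $\oo$ of $u$ and $w$ are handled consistently. All of these are routine under the hypotheses $1<p<\oo$ and $1<q<\oo$, and proceed exactly as in the proof of Lemma \ref{LD}.
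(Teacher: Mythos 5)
Your proposal is correct and takes essentially the same route as the paper: Tonelli's theorem to exploit the self-adjointness of $f\mapsto\int_0^{1/x}f$, H\"older's inequality against the assumed inequality over the unit ball of $L^p_w$, and the standard weighted duality $\sup\{\int_0^\oo fh: f\ge0,\ \int_0^\oo f^pw\le1\}=\big(\int_0^\oo h^{p'}w^{1-p'}\big)^{1/p'}$. The only cosmetic difference is that you deduce the reverse implication from the involutivity of the substitutions $p\mapsto q'$, $q\mapsto p'$, $u\mapsto w^{1-p'}$, $w\mapsto u^{1-q'}$ (correctly verified, since $(1-q')(1-q)=1$), whereas the paper simply remarks that the reverse implication is proved similarly.
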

\begin{proof} 
Suppose $(\ref{Main Hardy})$ holds with constant $C_1$ and fix a measurable $g\ge0$ on $(0,\oo)$. For any $f\ge0$ with $\int_0^\oo f^pw\le 1$, we have $(\int_0^\oo(\int_0^{1/t} f)^qu(t)\,dt)^{1/q}\le C_1$ so H\"older's inequality yields,
\[
\int_0^\oo f(x)\int_0^{1/x}g(t)\,dt\,dx=\int_0^\oo\int_0^{1/t}f(x)\,dxg(t)\,dt
\le C_1\bigg(\int_0^\oo g^{q'}u^{1-q'}\bigg)^{1/q'}.
\]
Taking the supremum over all such $f$ gives (\ref{Main Hardy Dual}). The reverse implication is proved similarly.\end{proof}

It is worth pointing out that (\ref{Main Hardy}) requires the inequality to hold for all non-negative functions but, as we shall soon see, we will only apply it to non-increasing functions. Nothing is lost, however, for suppose (\ref{Main Hardy}) were known to hold for non-increasing functions. Then, for any $f$, 
\begin{align*}
\bigg(\int_0^\oo\bigg(\int_0^{1/t} |f|\bigg)^qu(t)\,dt\bigg)^{1/q}
&\le \bigg(\int_0^\oo\bigg(\int_0^{1/t} f^*\bigg)^qu(t)\,dt\bigg)^{1/q}\\
&\le C\bigg(\int_0^\oo (f^*)^pw\bigg)^{1/p}\\
&\le C\bigg(\int_0^\oo |f|^pw\bigg)^{1/p}.
\end{align*}
The last inequality is an exercise, using the Hardy-Littlewood-Polya inequality and the fact that $w$ is non-decreasing. Thus (\ref{Main Hardy}) holds for all functions.

In the next lemma we isolate an estimate that will be used in the proof of both main theorems.
\begin{lem} \label{CompoundHardyLemma} Let $0<q<\oo$ and $1<p<\oo$. If
\<\label{CompoundHardy}
\bigg(\int_0^\oo\bigg(\frac1x\int_0^x\bigg(\int_0^{1/t}f^*\bigg)^2
\,dt\bigg)^{q/2}u(x)\,dx\bigg)^{1/q}\le C_4\bigg(\int_0^\oo (f^*)^pw\bigg)^{1/p}
\>
for all $f\in L^1(\mathbb R^n)$ then (\ref{Lorentz}) holds with $C=D^{1/2}C_4$. Here $D$ is the constant of (\ref{JT46}).
\end{lem}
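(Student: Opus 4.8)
The plan is to insert two elementary reductions between the hypothesis (\ref{CompoundHardy}) and the conclusion (\ref{Lorentz}). Write $g(t)=(\hat f)^*(t)$ and $h(t)=\int_0^{1/t}f^*$. Since $f\in L^1(\mathbb R^n)$ guarantees $\hat f$ is well defined and bounded, $g$ is the non-increasing rearrangement of $\hat f$ and is therefore automatically non-increasing; this monotonicity is the only structural fact I will use, and no further hypothesis on $f$ is needed. The right-hand sides of (\ref{CompoundHardy}) and (\ref{Lorentz}) coincide, so it suffices to bound $\int_0^\infty g(t)^q u(t)\,dt$ from above by $D^{q/2}$ times the $q$th power of the left-hand side of (\ref{CompoundHardy}).

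First I would exploit the monotonicity of $g$. Since $g$ is non-increasing, so is $g^2$, and hence $g(t)^2\le\frac1t\int_0^t g(s)^2\,ds$ for every $t>0$. Raising this to the power $q/2$ and integrating against $u$ gives
\[
\int_0^\infty g(t)^q u(t)\,dt\le\int_0^\infty\bigg(\frac1t\int_0^t g(s)^2\,ds\bigg)^{q/2}u(t)\,dt.
\]
Next I would invoke the Jodeit--Torchinsky estimate (\ref{JT46}), which asserts $\int_0^x g(t)^2\,dt\le D\int_0^x h(t)^2\,dt$ for each $x>0$. Applying this with $x=t$ inside the previous display yields
\[
\int_0^\infty g(t)^q u(t)\,dt\le D^{q/2}\int_0^\infty\bigg(\frac1t\int_0^t h(s)^2\,ds\bigg)^{q/2}u(t)\,dt,
\]
and since $h(s)=\int_0^{1/s}f^*$, the integral on the right is exactly the $q$th power of the left-hand side of (\ref{CompoundHardy}). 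Taking $q$th roots and applying the hypothesis (\ref{CompoundHardy}) then produces (\ref{Lorentz}) with $C=D^{1/2}C_4$.

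The argument is short and no single step presents a serious obstacle; the only point requiring a moment's care is the interlocking of the two inequalities. The monotonicity step is precisely what converts the pointwise quantity $g(t)^q$ into an average of $g^2$ over $(0,t)$, and this is exactly the shape on which (\ref{JT46}) acts, allowing the average of $g^2$ to be replaced by the average of $h^2$ at the cost of the factor $D^{q/2}$. If the right-hand side of (\ref{CompoundHardy}) is infinite the conclusion is trivial, and if it is finite the entire chain of inequalities consists of finite quantities, so no integrability difficulties arise.
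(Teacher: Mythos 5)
Your proof is correct and takes essentially the same route as the paper: both use the monotonicity of $\big((\hat f)^*\big)^2$ to bound $(\hat f)^*(x)^2$ by its average $\frac1x\int_0^x\big((\hat f)^*\big)^2$, then apply the Jodeit--Torchinsky estimate (\ref{JT46}) to replace that average by $\frac Dx\int_0^x\big(\int_0^{1/t}f^*\big)^2\,dt$, and finish by raising to the power $q/2$, integrating against $u$, and invoking the hypothesis (\ref{CompoundHardy}).
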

\begin{proof} For each $f\in L^1(\mathbb R^n)$, $(\hat f^*)^2$ is decreasing, so by (\ref{JT46}),
\[
(\hat f)^*(x)^2\le\frac1x\int_0^x(\hat f^*)^2
\le \frac Dx\int_0^x\bigg(\int_0^{1/t}f^*\bigg)^2\,dt.
\]
Thus,
\[
\bigg(\int_0^\oo (\hat f^*)(x)^qu(x)\,dx\bigg)^{1/q}
\le D^{1/2}\bigg(\int_0^\oo \bigg(\frac1x\int_0^x
\bigg(\int_0^{1/t}f^*\bigg)^2\,dt\bigg)^{q/2}u(x)\,dx\bigg)^{1/q}.
\]
The hypothesis completes the proof.
\end{proof}

We now turn to the proof of Theorem \ref{Thm1}.
\begin{proof} To see part \ref{1}, let $q\ge2$ and apply \cite{JT}*{Theorem 4.7} to get
\[
\int_0^\oo (\hat f^*)^qu\le D^{q/2}\int_0^\oo \bigg(\int_0^{1/t}f^*\bigg)^qu(t)\,dt,
\]
where $D$ is the constant from (\ref{JT46}). Since (\ref{Main Hardy}) is assumed to hold, we also have (\ref{Lorentz}) and hence (\ref{Lebesgue}).

For Part \ref2, the indices $p$ and $q$ are greater than $1$ so the assumption (\ref{Main Hardy}) and Lemma \ref{MHD} implies that (\ref{Main Hardy Dual}) also holds. But since $1<q<p\le 2$, we have $2\le p'<q'$ so part \ref{1} may be applied to conclude that (\ref{Lebesgue Dual}) holds. Lemma \ref{LD} shows that (\ref{Lebesgue}) holds, as required. 

Similar arguments show that, \ref{6}, \ref{7}, and \ref{8} follow from parts \ref{3}, \ref{4}, and \ref{5}, respectively. 

In view of part \ref{1} it suffices to establish \ref{3}, \ref{4}, and \ref{5} in the case $q<2$. A trivial estimate shows that part \ref{4} follows from part \ref{3}. To see that part \ref{5} follows from part \ref{4}, suppose $q>1$ and $t^{2-q}u(t)$ is decreasing. In this case,
\[
y^{q/2}\int_y^\oo u(x)\,\frac{dx}{x^{q/2}}
\le y^{q/2}y^{2-q}u(y)\int_y^\oo x^{q/2-2}\,dx
=\frac{yu(y)}{1-q/2}
\]
and
\[
\int_0^y u(x)\,dx\ge y^{2-q}u(y)\int_0^yx^{q-2}\,dx=\frac{yu(y)}{q-1}.
\]
It follows that $u\in B_{q/2}$. 

It remains to establish \ref{3} when $q<2$. By Lemma \ref{CompoundHardyLemma} it is enough to show that the condition of part \ref{3} implies inequality 
(\ref{CompoundHardy}). Let $\alpha=\max(1,q)$. Since $f^*$ is non-increasing and right continuous, it is an increasing pointwise limit of continuous non-increasing functions. (For instance, convolve $f$ with $(1/n)\chi_{(1,e^{1/n})}$ in $((0,\oo),dt/t)$ for $n=1,2,\dots$.) Therefore, we may assume without loss of generality that $f^*$ is continuous. For such an $f$, let $F(t)=-(\int_0^{1/t}f^*)^q$. We show that $t^{\alpha+1}F'(t)$ is non-decreasing in $t$ by considering two cases. When $0<q\le1$, $\int_0^{1/t}f^*$ decreases with $t$ so
\[
t^{1+\alpha}F'(t)=q\bigg(\int_0^{1/t}f^*\bigg)^{q-1}f^*(1/t)
\]
is a non-decreasing function of $t$. When $1<q<2$, $t\int_0^{1/t}f^*$ increases with $t$ so
\[
t^{1+\alpha}F'(t)=q\bigg(t\int_0^{1/t}f^*\bigg)^{q-1}f^*(1/t)
\]
is a non-decreasing function of $t$. Using the hypothesis of part\ref{3}, we get
\begin{align*}
\int_y^\oo t^{q/2}\int_t^\oo u(x)\,\frac{dx}{x^{q/2}}\,\frac{dt}{t^{1+\alpha}}
&=\int_y^\oo\int_y^xt^{q/2}\,\frac{dt}{t^{1+\alpha}}u(x)\,\frac{dx}{x^{q/2}}\\
&\le\frac{y^{q/2-\alpha}}{\alpha-q/2}\int_y^\oo u(x)\,\frac{dx}{x^{q/2}}\\
&\le \frac{\beta y^{-\alpha}}{\alpha-q/2}\bigg(\int_0^yu(x)\,dx+y^\alpha\int_y^\oo u(x)\,\frac{dx}{x^{\alpha}}\bigg)\\
&=\frac{\beta\alpha}{\alpha-q/2}\int_y^\oo\int_0^tu(x)\,dx\,\frac{dt}{t^{1+\alpha}}.
\end{align*}
Since this holds for all $y$, and $t^{1+\alpha}F'(t)$ is non-decreasing, Hardy's lemma implies that
\[
\int_0^\oo t^{1+\alpha}F'(t)t^{q/2}\int_t^\oo u(x)\,\frac{dx}{x^{q/2}}\,\frac{dt}{t^{1+\alpha}}
\le\frac{\beta\alpha}{\alpha-q/2}\int_0^\oo t^{1+\alpha}F'(t)\int_0^tu(x)\,dx\,\frac{dt}{t^{1+\alpha}}.
\]
This simplifies to,
\[
\int_0^\oo\int_0^xt^{q/2}F'(t)\,dt u(x)\,\frac{dx}{x^{q/2}}
\le\frac{\beta\alpha}{\alpha-q/2}\int_0^\oo\int_x^\oo F'(t)\,dt\,u(x)\,dx.
\]
Now we apply Minkowski's integral inequality with index $2/q$ to get,
\begin{align*}
&\int_0^\oo\bigg(\frac1x\int_0^x\bigg(\int_0^{1/s}f^*\bigg)^2\,ds\bigg)^{q/2}u(x)\,dx\\
&=\int_0^\oo\bigg(\frac1x\int_0^x\bigg(\int_s^\oo F'(t)\,dt\bigg)^{2/q}\,ds\bigg)^{q/2} u(x)\,dx\\
&\le\int_0^\oo\int_0^\oo\bigg(\frac1x\int_0^{\min(x,t)}ds\bigg)^{q/2}F'(t)\,dtu(x)\,dx\\
&=\int_0^\oo\int_0^xt^{q/2}F'(t)\,dtu(x)\,\frac{dx}{x^{q/2}}
+\int_0^\oo\int_x^\oo F'(t)\,dtu(x)\,dx\\
&\le\bigg(1+\frac{\beta\alpha}{\alpha-q/2}\bigg)\int_0^\oo\int_x^\oo F'(t)\,dtu(x)\,dx\\
&=\bigg(1+\frac{\beta\alpha}{\alpha-q/2}\bigg)\int_0^\oo\bigg(\int_0^{1/x}f^*\bigg)^qu(x)\,dx.
\end{align*}
This reduces the proof of (\ref{CompoundHardy}) to our assumption that (\ref{Main Hardy}) 
holds.\end{proof}

Proof of Theorem \ref{Thm2}. 
\begin{proof} The first part of the theorem is contained in the proof of \cite{BHJ}*{Theorem 1.1(ii)}, but it may also be deduced from the second part as follows. By Minkowski's integral inequality,
\begin{align*}
\bigg(\frac1x\int_{1/x}^\oo f^*(s)^2\,ds\bigg)^{1/2}
&\le\bigg(\frac1x\int_{1/x}^\oo \bigg(\frac1s\int_0^sf^*(t)\,dt\bigg)^2\,ds\bigg)^{1/2}\\
&\le\int_0^\oo\bigg(\frac1x\int_{\max(1/x,t)}^\oo\,\frac{ds}{s^2}\bigg)^{1/2}f^*(t)\,dt\\
&=\int_0^{1/x} f^*(t)\,dt
+x^{-1/2}\int_{1/x}^\oo t^{-1/2}f^*(t)\,dt.
\end{align*}
This estimate and the extended Minkowski inequality show that (\ref{Main Hardy}) and (\ref{BHJ Hardy}) together imply (\ref{New Hardy}).

To prove the second part we will apply Lemma \ref{CompoundHardyLemma}. To begin, break the inner integral at $t=1/x$ and use the triangle inequality in the $L^2$ norm to get,
\begin{align*}
&\bigg(\frac1x\int_0^x\bigg(\int_0^{1/t}f^*\bigg)^2\,dt\bigg)^{1/2}\\
&\le\bigg(\frac1x\int_0^x\bigg(\int_0^{1/x}f^*\bigg)^2\,dt\bigg)^{1/2}
+\bigg(\frac1x\int_0^x\bigg(\int_{1/x}^{1/t}f^*\bigg)^2\,dt\bigg)^{1/2}\\
&=\int_0^{1/x}f^*
+\bigg(\frac1x\int_{1/x}^\oo\bigg(\frac1t\int_{1/x}^tf^*\bigg)^2\,dt\bigg)^{1/2}\\
&\le\int_0^{1/x}f^*
+\bigg(\frac1x\int_{1/x}^\oo\bigg(\frac1{t-1/x}\int_{1/x}^tf^*\bigg)^2\,dt\bigg)^{1/2}\\
&\le\int_0^{1/x}f^*
+2\bigg(\frac1x\int_{1/x}^\oo f^*(t)^2\,dt\bigg)^{1/2}.
\end{align*}
The last inequality above is the Hardy inequality on the interval $(1/x,\oo)$. Taking $c=\max(2^{1/q-1},1)$, and using the (extended) Minkowski inequality gives
\begin{align*}
&\bigg(\int_0^\oo\bigg(\frac1x\int_0^x \bigg(\int_0^{1/t}f^*\bigg)^2\,dt\bigg)^{q/2}u(x)\,dx\bigg)^{1/q}\\
&\le c\bigg(\int_0^\oo\bigg(\int_0^{1/x} f^*\bigg)^qu(x)\,dx\bigg)^{1/q}
+c\bigg(\int_0^\oo\bigg(\frac1x\int_{1/x}^\oo (f^*)^2\bigg)^{q/2}u(x)\,dx\bigg)^{1/q}.
\end{align*}
These estimates show that (\ref{CompoundHardy}) holds whenever both (\ref{Main Hardy}) and (\ref{New Hardy}) do. Now Lemma \ref{CompoundHardyLemma} completes the proof.
\end{proof}

In the above proof we showed that inequalities (\ref{Main Hardy}) and (\ref{New Hardy}) imply (\ref{CompoundHardy}). To see that nothing is lost by this decomposition, we observe that the other implication also holds. Since the square of $\int_0^{1/x} f^*(t)\,dt$ is a decreasing function of $x$,
\[
\int_0^{1/x} f^*(t)\,dt\le\bigg(\frac1x\int_0^x\bigg(\int_0^{1/t}f^*\bigg)^2\,dt\bigg)^{1/2}.
\]
Thus, if (\ref{CompoundHardy}) holds then (\ref{Main Hardy}) holds for decreasing functions. It follows from the remark after Lemma \ref{MHD}, that (\ref{Main Hardy}) holds for all non-negative functions.
Inequality (\ref{New Hardy}) also follows from (\ref{CompoundHardy}): Since $f^*$ is decreasing,
\[
\frac1x\int_{1/x}^\oo f^*(t)^2\,dt
\le\frac1x\int_{1/x}^\oo \bigg(\frac1t\int_0^tf^*\bigg)^2\,dt
=\frac1x\int_0^x\bigg(\int_0^{1/t}f^*\bigg)^2\,dt.
\]

\section{Explicit weight conditions}\label{EWC}

In the previous section, our approach was to find Hardy-type inequalities, depending on the weights $u$ and $w$ that imply the Fourier inequality (\ref{Lebesgue}) for the weights $U$ and $W$. This puts us in a position to use known weight characterizations for Hardy-type operators to give conditions that ensure the validity of (\ref{Lebesgue}). In this section we do exactly that, beginning with the inequality (\ref{Main Hardy}). 

\begin{prop}\label{Main Hardy Weights} Suppose $0<q<\oo$, $1< p<\oo$ and $1/r=1/q-1/p$. The inequality (\ref{Main Hardy}), that is,
\[
\bigg(\int_0^\oo\bigg(\int_0^{1/x}f\bigg)^q u(x)\,dx\bigg)^{1/q}
\le C_1\bigg(\int_0^\oo f^pw\bigg)^{1/p},\quad f\ge0,
\]
holds (for some finite constant $C_1$) if and only if:
\begin{enumerate}[label=\rm{(\alph*)}] 
\item $1<p\le q$ and
\[
\sup_{x>0}\bigg(\int_0^{1/x} u\bigg)^{1/q}\bigg(\int_0^x w^{1-p'}\bigg)^{1/p'}<\oo;\quad\text{or}
\]
\item\label{0<q<p} $0<q<p$, $1<p$, and
\<\label{MH}
\bigg(\int_0^\oo \bigg(\int_0^x u\bigg)^{r/p}\bigg( \int_0^{1/x}w^{1-p'}\bigg)^{r/p'}u(x)\,dx\bigg)^{1/r}<\oo.
\>
\end{enumerate}
\end{prop}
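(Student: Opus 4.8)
The plan is to reduce (\ref{Main Hardy}) to the standard weighted Hardy inequality by the change of variables $y=1/x$, apply the classical weight characterizations for that inequality, and then transport the resulting conditions back to the original variables. Since (\ref{Main Hardy}) is a genuine Hardy-type inequality, no monotonicity of $u$ or $w$ is needed; the statement is a direct consequence of known results.

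First I would substitute $x=1/y$ in the left-hand integral of (\ref{Main Hardy}). Since $dx=-y^{-2}\,dy$ and the limits of integration reverse, the inner integral $\int_0^{1/x}f$ becomes $\int_0^y f$, and the outer weight becomes $\tilde u(y)=u(1/y)\,y^{-2}$ on $(0,\oo)$. Thus (\ref{Main Hardy}) is equivalent, with the same constant $C_1$, to the Hardy inequality
\[
\bigg(\int_0^\oo\Big(\int_0^y f\Big)^q\tilde u(y)\,dy\bigg)^{1/q}\le C_1\bigg(\int_0^\oo f^pw\bigg)^{1/p},\qquad f\ge0.
\]
The one computation worth recording is that, by the same substitution $s=1/y$, one has $\int_y^\oo\tilde u=\int_0^{1/y}u$, and that under $y=1/x$ the measure transforms as $\tilde u(y)\,dy=u(x)\,dx$.

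Next I would invoke the well-known characterizations of boundedness of the Hardy operator $f\mapsto\int_0^{\cdot}f$ from $L^p_w$ to $L^q_{\tilde u}$. In the case $1<p\le q$, \emph{Muckenhoupt's} condition $\sup_{y>0}(\int_y^\oo\tilde u)^{1/q}(\int_0^y w^{1-p'})^{1/p'}<\oo$ is necessary and sufficient; using $\int_y^\oo\tilde u=\int_0^{1/y}u$ and renaming the supremum variable yields condition (a) immediately. In the case $0<q<p$, $1<p$, the corresponding \emph{Maz'ya--Rozin type} condition, written in the form that carries $\tilde u$ as the outer measure, reads
\[
\bigg(\int_0^\oo\Big(\int_y^\oo\tilde u\Big)^{r/p}\Big(\int_0^y w^{1-p'}\Big)^{r/p'}\tilde u(y)\,dy\bigg)^{1/r}<\oo.
\]
Replacing $\int_y^\oo\tilde u$ by $\int_0^{1/y}u$ and then substituting $y=1/x$ (so that $\tilde u(y)\,dy=u(x)\,dx$, $\int_0^{1/y}u=\int_0^x u$, and $\int_0^y w^{1-p'}=\int_0^{1/x}w^{1-p'}$) transports this condition to exactly (\ref{MH}), completing the proof.

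The only point requiring care—and the main obstacle—is choosing, among the several equivalent forms of the $q<p$ Hardy condition, the one whose outer measure and exponents $r/p$, $r/p'$ match (\ref{MH}) after the change of variables. I would either quote that particular form directly, or recover it from the more frequently stated form (with outer measure $w^{1-p'}\,dy$ and exponents $r/q$, $r/q'$) via the standard integration-by-parts equivalence of the two expressions. Everything else is the bookkeeping of the substitution $y=1/x$, which I have isolated in the two identities above.
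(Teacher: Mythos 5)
Your proposal is correct and takes essentially the same approach as the paper: the authors likewise convert (\ref{Main Hardy}) to the standard Hardy inequality by the substitution $x\mapsto1/x$ and then cite \cite{B}*{Theorem 1} for $1<p\le q$ and \cite{SS}*{Theorem 2.4} for $0<q<p$, the latter being precisely the form with outer measure $u$ and exponents $r/p$, $r/p'$ that you single out. One caution on your fallback option: derive nothing by integration by parts from the $r/q$, $r/q'$ form, but quote the $r/p$, $r/p'$ form directly, since the equivalence of the two forms requires $q>1$ or local integrability of $w^{1-p'}$ (as the paper notes right after the proposition), and the proposition covers $0<q\le1$ as well.
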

\begin{proof} The change of variable $x\mapsto1/x$ converts (\ref{Main Hardy}) to the standard form of the Hardy inequality found in \cite{B}*{Theorem 1} and \cite{SS}*{Theorem 2.4}. The same change of variable is used to convert the weight conditions given in those results to the ones above.
\end{proof}
It is pointed out in \cite{SS}*{page 93} that if $q>1$ or if $w^{1-p'}$ is locally integrable, then the condition in Case \ref{0<q<p} may be replaced by,
\<\label{alt}
\bigg(\int_0^\oo \bigg(\int_0^{1/x} u\bigg)^{r/q}\bigg( \int_0^xw^{1-p'}\bigg)^{r/q'}w(x)^{1-p'}\,dx\bigg)^{1/r}<\oo.
\>
The equivalence of these two forms (essentially using integration by parts) may be needed to reconcile previous results with those given here.

The next result applies to inequality (\ref{BHJ Hardy}) because (\ref{BHJ Hardy}) is obtained from (\ref{BHJ Hardy whole}) by taking $f=f^*$. Since (\ref{BHJ Hardy}) only requires that (\ref{BHJ Hardy whole}) hold for non-increasing functions, the weight condition (\ref{BHJH}) is sufficient for (\ref{BHJ Hardy}) but may be stronger than necessary. The condition (\ref{BHJH}) may be compared, via integration by parts, to \cite{BHJ}*{Condition (1.9)} when $q>1$. 

\begin{prop}\label{BHJ Hardy Weights} Suppose $0<q<p$, $1< p<\oo$ and $1/r=1/q-1/p$. The inequality
\<\label{BHJ Hardy whole}
\bigg(\int_0^\oo\bigg(x^{-1/2}\int_{1/x}^\oo t^{-1/2}f(t)\,dt\bigg)^q u(x)\,dx\bigg)^{1/q}
\le C_5\bigg(\int_0^\oo f^pw\bigg)^{1/p},\quad f\ge0,
\>
holds (for some finite constant $C_5$) if and only if
\<\label{BHJH}
\bigg(\int_0^\oo \bigg(\int_x^\oo t^{-q/2} u(t)\,dt\bigg)^{r/p}\bigg(\int_{1/x}^\oo t^{-p'/2}w(t)^{1-p'}\,dt\bigg)^{r/p'}x^{-q/2}u(x)\,dx\bigg)^{1/r}<\oo.
\>
\end{prop}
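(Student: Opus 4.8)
The plan is to follow the template of Proposition \ref{Main Hardy Weights}: reduce (\ref{BHJ Hardy whole}) to a standard weighted Hardy inequality by changes of variable, quote the known weight characterization in the case $0<q<p$, and then translate the condition back to obtain (\ref{BHJH}). Because every step is an exact equivalence, both directions of the ``if and only if'' come out together. First I would substitute $x\mapsto1/x$ in the outer integral of (\ref{BHJ Hardy whole}): the factor $x^{-q/2}u(x)\,dx$ becomes $y^{q/2-2}u(1/y)\,dy$ and $\int_{1/x}^\infty$ becomes $\int_y^\infty$, so the left side takes the form $\int_0^\infty(\int_y^\infty t^{-1/2}f)^q\tilde u(y)\,dy$ with $\tilde u(y)=y^{q/2-2}u(1/y)$. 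Next I would absorb the inner weight by setting $g=t^{-1/2}f$, so that $\int_y^\infty t^{-1/2}f=\int_y^\infty g$ and the right side becomes $\int_0^\infty g^p\tilde w$ with $\tilde w(t)=t^{p/2}w(t)$. Thus (\ref{BHJ Hardy whole}) is equivalent to the adjoint Hardy inequality
\[
\Big(\int_0^\infty\Big(\int_y^\infty g\Big)^q\tilde u(y)\,dy\Big)^{1/q}\le C_5\Big(\int_0^\infty g^p\tilde w\Big)^{1/p},\qquad g\ge0.
\]

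For this inequality a weight characterization in the case $0<q<p$, $1<p<\infty$ is available in \cite{B} and \cite{SS} (the analogue for the adjoint operator $g\mapsto\int_y^\infty g$ of the form used in Proposition \ref{Main Hardy Weights}), namely the finiteness of
\[
\Big(\int_0^\infty\Big(\int_0^y\tilde u\Big)^{r/p}\Big(\int_y^\infty\tilde w^{1-p'}\Big)^{r/p'}\tilde u(y)\,dy\Big)^{1/r}.
\]
Then I would evaluate the inner integrals and undo the substitutions. The key algebraic simplification is $(p/2)(1-p')=-p'/2$, giving $\tilde w^{1-p'}(t)=t^{-p'/2}w(t)^{1-p'}$, while the substitution $s\mapsto1/\sigma$ in $\int_0^y\tilde u$ turns it into $\int_{1/y}^\infty\sigma^{-q/2}u(\sigma)\,d\sigma$. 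Substituting these into the displayed condition and then changing variables $y\mapsto1/x$ in the outer integral — under which $y^{q/2-2}u(1/y)\,dy$ collapses to $x^{-q/2}u(x)\,dx$ and the limits $\int_0^y$, $\int_y^\infty$ become $\int_x^\infty$, $\int_{1/x}^\infty$ respectively — reproduces the left side of (\ref{BHJH}) on the nose.

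The routine labor is the exponent and Jacobian bookkeeping; genuine care is needed in only two places. First, one must invoke the Hardy characterization for the adjoint operator $g\mapsto\int_y^\infty g$ rather than $g\mapsto\int_0^y g$, and select among the several equivalent forms for $0<q<p$ precisely the one whose back-substitution lands on (\ref{BHJH}); their equivalence is exactly the remark recorded after Proposition \ref{Main Hardy Weights}. Second, I would confirm that the two reductions are bona fide equivalences on nonnegative measurable functions — the map $f\mapsto t^{-1/2}f$ is a bijection and the changes of variable are invertible — so that the cited ``if and only if'' transfers without loss back to (\ref{BHJ Hardy whole}).
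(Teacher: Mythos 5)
Your argument is correct and is essentially the paper's own proof run in the mirror: the paper inverts the $t$-variable (substituting $t\mapsto 1/t$ and setting $f(1/t)=t^{3/2}g(t)$) to reach the direct Hardy operator in the standard form of \cite{SS}*{Theorem 2.4} with weights $x^{-q/2}u(x)$ and $t^{(3p-4)/2}w(1/t)$, while you invert the $x$-variable and absorb $t^{-1/2}$ into $g$ to land on the adjoint operator, whose $0<q<p$ characterization is the same theorem after one further reflection; your exponent bookkeeping (in particular $(p/2)(1-p')=-p'/2$ and $\int_0^y\tilde u=\int_{1/y}^\oo\sigma^{-q/2}u(\sigma)\,d\sigma$) is right and the back-substitution does reproduce (\ref{BHJH}) exactly, with both directions of the equivalence preserved since all your maps are bijections on non-negative functions. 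One small correction: \cite{B} covers only $1<p\le q$, so the $0<q<p$ characterization you invoke rests on \cite{SS}*{Theorem 2.4} alone, and the remark after Proposition \ref{Main Hardy Weights} is not actually needed, because you use only the single Maz'ya--Rozin-type form (the one with $\tilde u$ in the outer measure), which carries no local-integrability caveat.
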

\begin{proof} This time we make the substitution $t\mapsto1/t$ on both sides of the inequality and replace $f(1/t)$ by $t^{3/2}g(t)$. This puts it in the standard form of \cite{SS}*{Theorem 2.4}, but with weights $x^{-q/2} u(x)$ and $t^{(3p-4)/2}w(1/t)$. As before, the same substitution is used to re-write the weight condition in the above form.
\end{proof}

Necessary and sufficient conditions on weights for which (\ref{New Hardy}) holds are known, but are substantially more complicated than for the other two inequalities. 
\begin{prop} \label{New Hardy Weights}
 Suppose $0<q<p$, $1<p<\oo$ and $1/r=1/q-1/p$. The inequality (\ref{New Hardy}), that is, 
\[
\bigg(\int_0^\oo\bigg(\frac1x\int_{1/x}^\oo (f^*)^2\bigg)^{q/2} u(x)\,dx\bigg)^{1/q}
\le C_3\bigg(\int_0^\oo (f^*)^pw\bigg)^{1/p},\quad f\in L^1(\mathbb R^n),
\]
holds (for some finite constant $C_3$) whenever:
\begin{enumerate}[label=\rm{(\alph*)}]
\item $0<q<p\le 2$ and
\<\label{seq}
\sup_{x_k}\bigg(\sum_{k\in\mathbb Z}\bigg(\int_{1/x_{k+1}}^{1/x_k}(x_{k+1}t-1)^{q/2}u(t)\,\frac{dt}{t^q}\bigg)^{r/q}\bigg(\int_0^{x_{k+1}}w\bigg)^{-r/p}\bigg)^{1/r}<\oo
\>
where the supremum is taken over all increasing sequences $x_k$, $k\in\mathbb Z$; or
\item $0<q<2<p$, (\ref{seq}), and
\begin{multline}\label{NH}
\bigg(\int_0^\oo\bigg(\int_{1/x}^\oo\bigg(\frac1{t-1/x}\int_0^tw\bigg)^{-p/(p-2)}w(t)\,dt\bigg)^{(r/2)(p-2)/p}\\
\times\bigg(\int_x^\oo t^{-q/2}u(t)\,dt\bigg)^{r/p}x^{-q/2}u(x)
\,dx\bigg)^{1/r}<\oo.
\end{multline}
\end{enumerate}
\end{prop}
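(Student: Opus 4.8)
The plan is to recast (\ref{New Hardy}) as a single weighted Hardy-type inequality over the cone of non-increasing functions and then to appeal to the known weight characterizations for such inequalities. Since only $f^*$ appears, I would first replace $f^*$ by an arbitrary non-negative non-increasing function $g$ on $(0,\oo)$, assuming $g$ continuous by the same approximation argument used in the proof of Theorem \ref{Thm1}. Setting $\phi=g^2$, which is again non-increasing, absorbing the factor $x^{-1}$ into the weight, and writing $Q=q/2$, $P=p/2$, the inequality to be proved becomes
\[
\bigg(\int_0^\oo\bigg(\int_{1/x}^\oo\phi\bigg)^{Q}x^{-q/2}u(x)\,dx\bigg)^{1/Q}\le C\bigg(\int_0^\oo\phi^{P}w\bigg)^{1/P},\qquad \phi\downarrow,
\]
where $0<Q<P$ and the scaling index is $\rho:=r/2$, with $1/\rho=1/Q-1/P=2/r$. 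The squaring is what makes the hypothesis $p\lessgtr2$ decisive: it corresponds exactly to $P\lessgtr1$, and the monotone-cone theory behaves quite differently on the two sides of $P=1$. This explains why the effective left-hand weight is $x^{-q/2}u(x)$, which is precisely the weight appearing inside $\int_x^\oo t^{-q/2}u(t)\,dt$ in (\ref{NH}).

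Next I would reduce to the standard setting of \cites{S1,S2,SS}. Testing the displayed inequality on the characteristic functions $\phi=\chi_{(0,s)}$ gives, after pulling the $x^{-1}$ back out, the value $(sx-1)_+/x^2$, so the left-hand contribution over a range $(1/x_{k+1},1/x_k)$ with $s=x_{k+1}$ is exactly
\[
A_k=\int_{1/x_{k+1}}^{1/x_k}(x_{k+1}t-1)^{q/2}u(t)\,\frac{dt}{t^q},
\]
while the right-hand side equals $(\int_0^{x_{k+1}}w)^{1/P}$. Aggregating these testing ratios in $\ell^\rho$ and using $\rho/Q=r/q$ and $\rho/P=r/p$ reproduces exactly the left side of (\ref{seq}). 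For $P\le1$, that is $p\le2$, the monotone-cone inequality with $Q<P$ is characterized by this discretized testing condition alone, so (\ref{seq}) suffices and part (a) follows.

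When $P>1$, that is $p>2$, the space $L^P_w$ carries a genuine Banach norm and testing on characteristic functions is no longer enough. Here I would invoke the down-space (Sawyer-type) duality of \cites{S1,S2}: replacing $w$ by its level function adapted to the tail operator reduces the problem to an \emph{unrestricted} Hardy-type inequality whose weight characterization is recorded in \cite{SS}*{Theorem 2.4}, accessed after a change of variables $t\mapsto1/t$ that turns $\int_{1/x}^\oo$ into a conjugate Hardy operator. The diagonal part of this characterization reproduces (\ref{seq}) once more, while the genuinely $P>1$ dual part yields the integral condition (\ref{NH}). The exponents confirm the mechanism: the inner power is $-p/(p-2)=-P'$, the outer power is $(r/2)(p-2)/p=\rho/P'$, and the averaged weight $\frac1{t-1/x}\int_0^tw$ is the level function of $w$ relative to the tail operator on $(1/x,\oo)$, the ratio $\frac{1}{t-1/x}\int_0^t w$ being the characteristic ``slope from the left endpoint'' of the least concave majorant. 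Together (\ref{seq}) and (\ref{NH}) then give part (b).

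The hard part will be the case $p>2$: carrying out the down-cone duality for this specific tail operator and matching the outcome to the explicit conditions (\ref{seq}) and (\ref{NH}). Identifying the correct level function responsible for the nested average $\frac1{t-1/x}\int_0^tw$, and then verifying that the two displayed conditions jointly dominate both the diagonal and the off-diagonal contributions produced by the duality, is the delicate technical heart; the change of variables needed to bring the operator into the standard form of \cite{SS}*{Theorem 2.4}, together with the bookkeeping of the transformed weights, must be tracked carefully throughout. By contrast, the case $p\le2$ should be routine once the reduction to $\phi\downarrow$ with $P\le1$ is in place, since there the characteristic-function testing condition (\ref{seq}) is already sufficient.
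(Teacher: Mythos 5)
Your reduction is exactly the paper's first step: since only $f^*$ enters, you pass to an arbitrary non-negative decreasing $\phi=(f^*)^2$ and view (\ref{New Hardy}) as a weighted inequality for the tail operator on the cone of decreasing functions with halved indices $Q=q/2$, $P=p/2$, and your exponent bookkeeping is correct throughout — the testing computation with $\phi=\chi_{(0,s)}$ producing the summands of (\ref{seq}), and the identifications $-p/(p-2)=-P'$ and $(r/2)(p-2)/p=(r/2)/P'$ in (\ref{NH}), all match. But at this point the paper is essentially done: after the change of variables $x\mapsto1/x$ the inequality is in the form treated by \cite{GS}*{Theorem 5.1} with indices $p/2$ and $q/2$, and cases (ii) and (vi) of that theorem are \emph{precisely} the two characterizations you assert — the sequence condition (\ref{seq}) alone when $P\le1$, and (\ref{seq}) together with (\ref{NH}) when $Q<1<P$. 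What you propose instead is, in effect, to re-derive that reduction theorem, and that is where the gap lies.

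Concretely: for part (a) you state, but do not prove, that for $Q<P\le1$ the cone inequality is characterized by the aggregated $\ell^{r/2}$ testing condition; testing on characteristic functions gives necessity of each summand, but sufficiency of (\ref{seq}) is itself a theorem (it is case (ii) of \cite{GS}*{Theorem 5.1}), not a routine consequence of ``monotone-cone theory.'' For part (b) the route you sketch would not go through as described: since $Q=q/2<1$ the left-hand side is not a norm, so the inequality cannot be converted into a statement about a single linear functional of $\phi$, which blocks a direct application of Sawyer-type (down-space) duality from \cite{S1} and \cite{S2}; moreover \cite{SS}*{Theorem 2.4} characterizes the \emph{unrestricted} inequality, not its restriction to the cone, and the shifted average $\frac1{t-1/x}\int_0^t w$ in (\ref{NH}) reflects the kernel structure $(t-1/x)_+$ arising in the Gogatishvili--Stepanov reduction rather than a level function of $w$, so your heuristic identification does not supply the missing computation. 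You flag this yourself as ``the delicate technical heart'' left undone — and it is exactly the content of \cite{GS}*{Theorem 5.1}. The repair is a one-line citation of that theorem, which is how the paper concludes.
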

\begin{proof} Every non-negative, decreasing function on $(0,\oo)$ can be represented as a limit of functions $(f^*)^2$ for $f\in L^1(\mathbb R^n)$. So, letting $x\mapsto1/x$ puts (\ref{New Hardy}) in the form of \cite{GS}*{Theorem 5.1}, with indices $p/2$ and $q/2$. Cases (ii) and (vi) of that theorem yield the results above, after letting $x\mapsto1/x$ again.
\end{proof}

We conclude this section with a summary of sufficient conditions for the Fourier inequality (\ref{Lebesgue}) in the case $q<p$.
\begin{thm}\label{Summary} Suppose $0<q<p$ and $1<p<\oo$. Let $U$ and $W$ be non-negative, measurable function on $\mathbb R^n$ and set $u=U^*$ and $1/w=(1/W)^*$. Inequality (\ref{Lebesgue}), that is,
\<
\bigg(\int_{\mathbb R^n} |\hat f(y)|^qU(y)\,dy\bigg)^{1/q}
\le C\bigg(\int_{\mathbb R^n}|f(x)|^pW(x)\,dx\bigg)^{1/p},\quad f\in L^1(\mathbb R^n),
\>
holds (for some finite constant $C$) provided 
(\ref{MH}) holds and \ref{BH1}, \ref{q>1} or \ref{q<1} is satisfied:  
\begin{enumerate}[label=\rm{(\roman*)}]
  \item\label{BH1} $2\le q< p$ or $1<q<p\le2$.
  \item\label{q>1} $1<q<2<p$ and
  \begin{enumerate}
    \item one or more of \ref{3}--\ref{8} from Theorem \ref{Thm1},
    \item  (\ref{BHJH}), or
    \item (\ref{seq}) and (\ref{NH}).
  \end{enumerate}
  \item\label{q<1}  $0<q<1<p$ and 
  \begin{enumerate}
    \item \ref{3} or \ref{4} from Theorem \ref{Thm1},
    \item (\ref{BHJH}),
    \item $p\le2$ and (\ref{seq}), or
    \item $p>2$ and (\ref{seq}) and (\ref{NH}).
  \end{enumerate}
\end{enumerate}
\end{thm}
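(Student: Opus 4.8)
The plan is to read Theorem \ref{Summary} as a compilation of the reduction theorems of Section \ref{R2H} with the weight characterizations established earlier in this section, so that the proof reduces to verifying that, in each listed case, the stated weight conditions supply the Hardy-type inequalities those reduction theorems require. The organizing observation is that the standing hypothesis (\ref{MH}) is, by Proposition \ref{Main Hardy Weights}\ref{0<q<p}, exactly equivalent to the validity of the main Hardy inequality (\ref{Main Hardy}) in the range $0<q<p$, $1<p<\oo$. Hence in every case we may take (\ref{Main Hardy}) as given, which is precisely the common hypothesis of both Theorem \ref{Thm1} and Theorem \ref{Thm2}. The entire proof is then a matter of feeding the appropriate auxiliary Hardy inequality into one of these two theorems.

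First I would dispatch case \ref{BH1}: when $2\le q<p$ condition \ref{1} of Theorem \ref{Thm1} applies, and when $1<q<p\le2$ condition \ref{2} applies, so in either subcase (\ref{Main Hardy}) yields (\ref{Lebesgue}) at once. For the \emph{a priori} alternative (a) in each of cases \ref{q>1} and \ref{q<1}, I would invoke the appropriate part of Theorem \ref{Thm1} directly: when $1<q<2<p$ all of \ref{3}--\ref{8} are available (since $q>1$), whereas for $0<q<1$ only \ref{3} and \ref{4} are listed, because parts \ref{5}--\ref{8} carry the hypothesis $q>1$. In every instance the cited condition, combined with (\ref{Main Hardy}), produces (\ref{Lebesgue}) through Theorem \ref{Thm1}.

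The remaining alternatives route through Theorem \ref{Thm2}, which asserts that (\ref{Main Hardy}) together with either (\ref{BHJ Hardy}) or (\ref{New Hardy}) implies (\ref{Lorentz}) and hence (\ref{Lebesgue}). For the alternatives (b) citing (\ref{BHJH}), I would apply Proposition \ref{BHJ Hardy Weights} to pass from (\ref{BHJH}) to (\ref{BHJ Hardy whole}), and thence to (\ref{BHJ Hardy}) by restriction to non-increasing functions, so Theorem \ref{Thm2} applies. For the alternatives citing (\ref{seq}), or (\ref{seq}) and (\ref{NH}), I would apply Proposition \ref{New Hardy Weights}: its case (a) gives (\ref{New Hardy}) from (\ref{seq}) alone when $p\le2$ (alternative (c) of case \ref{q<1}), while its case (b) gives (\ref{New Hardy}) from (\ref{seq}) and (\ref{NH}) when $2<p$ (alternative (c) of case \ref{q>1} and alternative (d) of case \ref{q<1}). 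In each instance Theorem \ref{Thm2} then delivers (\ref{Lebesgue}).

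Because the argument is purely an assembly of previously established implications, no single step is genuinely hard; the one point demanding care is the bookkeeping on the index ranges, namely checking that each cited result is invoked strictly within its stated range of $p$ and $q$, and in particular that conditions carrying the hypothesis $q>1$ are never used when $0<q<1$. This is exactly what dictates the case split into \ref{BH1}, \ref{q>1}, and \ref{q<1}: it explains why case \ref{q<1} omits parts \ref{5}--\ref{8} of Theorem \ref{Thm1}, and why its $\Gamma$-space alternative is further split according to whether $p\le2$ (Proposition \ref{New Hardy Weights}, case (a)) or $p>2$ (case (b)).
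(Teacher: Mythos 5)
Your proposal is correct and is precisely the argument the paper intends: Theorem \ref{Summary} is stated without a separate proof because it is exactly this assembly, with (\ref{MH}) equivalent to (\ref{Main Hardy}) via Proposition \ref{Main Hardy Weights}\ref{0<q<p}, case \ref{BH1} and the alternatives (a) handled by Theorem \ref{Thm1}, and the remaining alternatives handled by Theorem \ref{Thm2} together with Propositions \ref{BHJ Hardy Weights} and \ref{New Hardy Weights}. Your index-range bookkeeping (omitting \ref{5}--\ref{8} when $q\le1$, and splitting the $\Gamma$-space alternative at $p=2$ according to cases (a) and (b) of Proposition \ref{New Hardy Weights}) matches the paper's case structure exactly.
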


Necessary and sufficient conditions for weighted Hardy inequalities may be expressed in a wide variety of different, but equivalent, forms. See, for example, \cites{OS1,OS2,W}. The form of the weight conditions in Propositions \ref{Main Hardy Weights}, \ref{BHJ Hardy Weights}, and \ref{New Hardy Weights} represent one choice but Theorems \ref{Thm1} and \ref{Thm2} may be combined with any of the various forms to give sufficient conditions for the Fourier inequality (\ref{Lebesgue}).

\section{Comparison of Theorems 1 and 4 in \cite{BH2003}}\label{Comp14}

In \cite{BH2003}*{Theorem 4}, under the a priori assumption that $w\in B_p$ or $u^{1-q'}\in B_{q'}$, sufficient conditions for (\ref{Lebesgue}) are given that appear different than those of \cite{BH2003}*{Theorem 1}. The latter appear here in Proposition \ref{BH0}, see condition (\ref{BH0cond}), and part \ref{BH1} of Theorem \ref{Summary}, see the equivalent conditions (\ref{MH}) and (\ref{alt}). The weight conditions from the two theorems are compared in \cite{BH2003}*{Remark 5} but neither theorem is shown to directly imply the other. The next lemma completes the comparison, showing that the two theorems give equivalent weight conditions whenever \cite{BH2003}*{Theorem 4} applies. We conclude that  \cite{BH2003}*{Theorem 1} is the stronger result because it does not require any a priori assumption.

\begin{lem}\label{1v4} If $1<p<\oo$, $w\in B_p$ and $w$ is increasing, then there is a constant $C_6$ such that
\<\label{equiv}
x\bigg(\int_0^x w\bigg)^{-1/p}\le \bigg(\int_0^x w^{1-p'}\bigg)^{1/p'}\le C_6x\bigg(\int_0^x w\bigg)^{-1/p}
\>
for all $x>0$.
\end{lem}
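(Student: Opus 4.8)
The plan is as follows. The left-hand inequality in (\ref{equiv}) needs nothing but positivity of $w$: writing $1=w^{1/p}w^{-1/p}$ and applying H\"older's inequality on $(0,x)$ gives $x\le(\int_0^x w)^{1/p}(\int_0^x w^{1-p'})^{1/p'}$, which is exactly the left half. So I would put all the effort into the right-hand (reverse-H\"older) inequality. Abbreviating $W(x)=\int_0^x w$ and raising the right-hand inequality to the power $p'$, it becomes $\int_0^x w^{1-p'}\le C_6^{p'}x^{p'}W(x)^{1-p'}$ (using $-p'/p=1-p'$). Because monotonicity of $w$ gives the elementary bound $W(x)\le xw(x)$ and $1-p'<0$, I would first reduce this to the cleaner claim that $\int_0^x w^{1-p'}\le C\,x\,w(x)^{1-p'}$ for all $x>0$; call it $(\star)$. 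The return trip to (\ref{equiv}) then uses only $W(x)\le xw(x)$ once more.

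To prove $(\star)$ I would bring in the $B_p$ condition through the tail $\Phi(y)=\int_y^\infty w(s)s^{-p}\,ds$ and pin $\Phi$ down from both sides. Combining $w\in B_p$ with $W(y)\le yw(y)$ yields the upper bound $\Phi(y)\le\beta y^{1-p}w(y)$, while $w(s)\ge w(y)$ for $s\ge y$ yields the lower bound $\Phi(y)\ge(p-1)^{-1}y^{1-p}w(y)$. Using the upper bound and the identity $(p-1)(1-p')=-1$, the integrand in $(\star)$ satisfies $w(y)^{1-p'}\le\beta^{p'-1}y^{-1}\Phi(y)^{1-p'}$. The same upper bound, rewritten as $y^{-1}\Phi(y)\le\beta(-\Phi'(y))$ (here $\Phi'(y)=-w(y)y^{-p}$), is what lets me trade the singular factor $y^{-1}$ for a total derivative: $\int_0^x y^{-1}\Phi^{1-p'}\le\beta\int_0^x(-\Phi')\Phi^{-p'}=\frac{\beta}{p'-1}\big(\Phi(x)^{1-p'}-\Phi(0^+)^{1-p'}\big)$. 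Discarding the nonnegative endpoint term and then inserting the lower bound on $\Phi(x)$ (with $(1-p)(1-p')=1$, so that $\Phi(x)^{1-p'}\le(p-1)^{p'-1}x\,w(x)^{1-p'}$) would assemble $(\star)$, with $C=\beta^{p'}(p-1)^{p'-1}/(p'-1)$.

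The hard part is $(\star)$, and the reason it needs this somewhat indirect route is worth flagging. From $B_p$ and monotonicity one only extracts $xw(x)\le\beta(p-1)W(x)$, that is $W(x)\approx xw(x)$, and the tempting moves --- feeding the resulting polynomial growth of $W$ into the integral, or iterating the doubling $w(2y)\le Mw(y)$ dyadically --- both secretly require the local ratio $\sup_x xw(x)/W(x)$ to be strictly below $p$. The condition $B_p$ does \emph{not} guarantee this: a jump weight such as $w=1$ on $(0,1)$ and $w=V$ on $(1,\infty)$ lies in $B_p$ (with $\beta=V/(p-1)$) yet has ratio $V$, which may well exceed $p$, while $(\star)$ nonetheless holds. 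The whole point of routing through $\Phi$ is that the exact telescoping $\int(-\Phi')\Phi^{-p'}$ replaces the lossy dyadic sum and leaves only the endpoint value $\Phi(x)$, so no smallness of the ratio is needed. The only technical care is at $y\to0^+$: since $w$ is increasing, $\Phi(0^+)=\int_0^\infty w(s)s^{-p}\,ds$ is generically infinite and $\Phi(0^+)^{1-p'}=0$, but in every case $\Phi(0^+)^{1-p'}\ge0$, so dropping it only strengthens the estimate.
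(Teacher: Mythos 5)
Your proof is correct, and it takes a genuinely different route from the paper's. Both arguments get the left-hand inequality of (\ref{equiv}) from the same H\"older trick, but for the right-hand inequality the paper never manipulates the $B_p$ integral condition directly: it invokes \cite{AM}*{Theorem 1}, by which $w\in B_p$ makes the averaging operator $f\mapsto\frac1t\int_0^t f$ bounded on non-increasing functions in $L^p(w)$, and then tests that Hardy inequality on the truncations $f_k=\max(w(1/k),w)^{1-p'}\chi_{(0,x)}$; the pointwise bound $f_k^pw\le f_k$ lets the factor $\int_0^x f_k$ partially cancel across the inequality, and letting $k\to\infty$ gives exactly the second half of (\ref{equiv}). (The truncation at height $w(1/k)^{1-p'}$ plays the same role there that your careful handling of $\Phi(0^+)$ plays at the origin.) You instead work entirely from the integral form of $B_p$ via the tail $\Phi(y)=\int_y^\infty w(s)s^{-p}\,ds$: the two-sided bounds $(p-1)^{-1}y^{1-p}w(y)\le\Phi(y)\le\beta y^{1-p}w(y)$, with the upper bound recast as $y^{-1}\Phi(y)\le\beta(-\Phi'(y))$, convert the singular integral $\int_0^x y^{-1}\Phi^{1-p'}$ into the exact telescoping $\beta\int_0^x(-\Phi')\Phi^{-p'}$. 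This is legitimate: $\Phi$ is locally absolutely continuous and decreasing, $B_p$ together with $\int_0^yw\le yw(y)<\infty$ forces $\Phi(y)<\infty$, and unless $w\equiv0$ the $B_p$ condition also forces $\int_0^yw>0$ for every $y>0$, so $w(y)>0$ and $\Phi(y)>0$ and no degeneracy arises; all your exponent arithmetic ($(1-p)(1-p')=1$, $p(1-p')=-p'$) checks out. What each route buys: yours is self-contained and yields the explicit constant $C_6=\big(\beta^{p'}(p-1)^{p'-1}/(p'-1)\big)^{1/p'}$ depending only on $\beta$ and $p$, whereas the paper's constant is the unspecified Hardy constant from \cite{AM}; the paper's route buys brevity and makes transparent where $B_p$ enters, since \cite{AM}*{Theorem 1} is precisely the equivalence of $B_p$ with that Hardy inequality. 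One small observation: your intermediate claim $(\star)$ is not actually stronger than the right half of (\ref{equiv}) --- H\"older gives $x^{p'}(\int_0^xw)^{1-p'}\le\int_0^xw^{1-p'}$ while monotonicity gives $xw(x)^{1-p'}\le x^{p'}(\int_0^xw)^{1-p'}$, so the two are equivalent up to constants --- and your diagnostic remark about why naive doubling or polynomial-growth arguments fail is accurate and consistent with the jump-weight example you supply.
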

\begin{proof} By H\"older's inequality,
\[
x=\int_0^x w^{1/p}w^{-1/p}\le\bigg(\int_0^x w\bigg)^{1/p}\bigg(\int_0^xw^{1-p'}\bigg)^{1/p'}.
\]
This proves the first inequality of (\ref{equiv}). For the second, recall \cite{AM}*{Theorem 1}, which shows that since $w\in B_p$ there exists a constant $C_6<\oo$ such that the Hardy inequality
\[
\bigg(\int_0^\oo\bigg(\frac1t\int_0^tf\bigg)^pw(t)\,dt\bigg)^{1/p}
\le C_6\bigg(\int_0^\oo f^pw\bigg)^{1/p}
\]
holds for all non-increasing functions $f\ge0$. Fix $x>0$ and, for $k>1/x$, let $f_k=\max(w(1/k),w)^{1-p'}\chi_{(0,x)}$. Since $f_k$ is non-increasing, so is its moving average. Therefore, 
\[
\bigg(\int_0^x w\bigg)^{1/p}\frac1x\int_0^x f_k
\le\bigg(\int_0^\oo\bigg(\frac1t\int_0^t f_k\bigg)^p w(t)\,dt\bigg)^{1/p}
\le C_6\bigg(\int_0^x f_k^pw\bigg)^{1/p}.
\]
Since $f_k^pw\le f_k$, and $f_k$ is bounded above, we have
\[
\bigg(\int_0^x f_k\bigg)^{1/p'}\le C_6x\bigg(\int_0^x w\bigg)^{-1/p}.
\]
Letting $k\to\oo$ gives the second inequality of (\ref{equiv}). 
\end{proof}

Using this lemma we show, in four cases, that if the appropriate a priori condition holds, then the Fourier inequalities that follow from \cite{BH2003}*{Theorem 4}, are the same as those that follow from \cite{BH2003}*{Theorem 1}.

Case 1. Suppose $1<p\le q$, $q\ge2$, and $w\in B_p$. Then the weight pair $(u,w)$ satisfies the weight condition of \cite{BH2003}*{Theorem 4(i)} if and only if it satisfies the weight condition of \cite{BH2003}*{Theorem 1(i)}. That is,
\[
\sup_{x>0}x\bigg(\int_0^{1/x}u\bigg)^{1/q}\bigg(\int_0^x w\bigg)^{-1/p}<\oo
\]
if and only if (\ref{BH0cond}) holds. This follows directly from Lemma \ref{1v4}.

Case 2. Suppose $2\le q<p$, $1/r=1/q-1/p$, and $w\in B_p$. Then the weight pair $(u,w)$ satisfies the weight condition of \cite{BH2003}*{Theorem 4(ii)} if and only if it satisfies the weight condition of \cite{BH2003}*{Theorem 1(ii)}. That is,
\<\label{4ii}
\bigg(\int_0^\oo \bigg(\int_0^xu\bigg)^{r/p}\bigg(\int_0^{1/x} w\bigg)^{-r/p}x^{-r}u(x)\,dx\bigg)^{1/r}<\oo
\>
if and only if (\ref{alt}) holds. Lemma \ref{1v4} gives the equivalence of (\ref{4ii}) and (\ref{MH}); the remark following Proposition \ref{Main Hardy Weights} completes the argument.


Case 3. Suppose $1<p\le q<2$, and $u^{1-q'}\in B_{q'}$. Then the weight pair $(u,w)$ satisfies the weight condition of \cite{BH2003}*{Theorem 4(iii)} if and only if it satisfies the weight condition of \cite{BH2003}*{Theorem 1(i)}. That is,
\[
\sup_{x>0}\frac1x\bigg(\int_0^{1/x}u^{1-q'}\bigg)^{-1/q'}\bigg(\int_0^x w^{1-p'}\bigg)^{1/p'}<\oo
\]
if and only if (\ref{BH0cond}) holds. This follows from Lemma \ref{1v4}, with $p$ and $w$ replaced by $q'$ and $u^{1-q'}$.


The last case does not include the index range $1<q<2<p$, which should have been excluded in the statements of \cite{BH2003}*{Theorems 1 and 4}. We also fix a typographic error in the weight condition of \cite{BH2003}*{Theorem 4(iv)}. 

Case 4. Suppose $1<q<p\le2$, $1/r=1/q-1/p$, and $u^{1-q'}\in B_{q'}$. Then the weight pair $(u,w)$ satisfies the weight condition of \cite{BH2003}*{Theorem 4(iv)} if and only if it satisfies the weight condition of \cite{BH2003}*{Theorem 1(ii)}. That is,
\[
\bigg(\int_0^\oo \bigg(\int_0^xw^{1-p'}\bigg)^{r/q'}\bigg(\int_0^{1/x} u^{1-q'}\bigg)^{-r/q'}x^{-r}w(x)^{1-p'}\,dx\bigg)^{1/r}<\oo.
\]
if and only if (\ref{alt}) holds. This follows from Lemma \ref{1v4}, with $p$ and $w$ replaced by $q'$ and $u^{1-q'}$.


These four cases together show that \cite{BH2003}*{Theorem 1} is stronger than \cite{BH2003}*{Theorem 4} because it applies without the a priori conditions of \cite{BH2003}*{Theorem 4}.

\section{Lorentz space Fourier inequalities}\label{Lambda}

In this section we return briefly to inequality (\ref{Lorentz}), but without the monotonicity restrictions on the weights $u$ and $w$. Let $0<p<\oo$ and $w$ be a weight, and define the Lorentz $\Lambda$- and $\Gamma$-``norms'' of $f$ by,
\[
\|f\|_{\Lambda_p(w)}=\bigg(\int_0^\oo (f^*)^pw\bigg)^{1/p}\quad\text{and}\quad
\|f\|_{\Gamma_p(w)}=\bigg(\int_0^\oo \bigg(\frac1t\int_0^tf^*\bigg)^pw(t)\,dt\bigg)^{1/p}.
\]
Since $f^*$ is non-increasing,  $\|f\|_{\Lambda_p(w)}\le\|f\|_{\Gamma_p(w)}$ for any weight $w$. If $w\in B_p$, then \cite{AM}*{Theorem 1} shows that the two are equivalent.

With this notation, inequality (\ref{Lorentz}) becomes 
\<\label{LL}
\|\hat f\|_{\Lambda_q(u)}\le C\|f\|_{\Lambda_p(w)}.
\>
This inequality, which expresses the boundedness of the Fourier transform between Lorentz $\Lambda$-spaces, was considered in \cite{BH2003}*{Theorems 2 and 3}. Restricting the domain to the smaller $\Gamma$-space leads to the inequality 
\<\label{GL}
\|\hat f\|_{\Lambda_q(u)}\le C\|f\|_{\Gamma_p(w)}.
\>
These were studied in \cite{S1}, where it was shown in \cite{S1}*{Theorem 3.4} that for $0<p\le q<\oo$, inequality (\ref{GL}) holds whenever
\<\label{level}
\sup_{x<y}\bigg(\frac xy\int_0^y u\bigg)^{1/q}\bigg(x^p\int_0^{1/x}w(t)\,dt+\int_{1/x}^\oo w(t)\,\frac{dt}{t^p}\bigg)^{-1/p}<\oo.
\>

Observe that this result includes \cite{BH2003}*{Theorem 2(i)}; for if $w\in B_p$, inequalities (\ref{LL}) and (\ref{GL}) are equivalent. Also, if $u$ is non-increasing and
\[
\sup_{x>0}\frac1x\bigg(\int_0^x u\bigg)^{1/q}\bigg(\int_0^{1/x}w\bigg)^{-1/p}<\oo,
\]
then straightforward estimates show that (\ref{level}) also holds.

But, as we see next, results for the $\Gamma$-space inequality (\ref{GL}) may be used to further weaken sufficient conditions for the $\Lambda$-space inequality (\ref{LL}). This result may be viewed as a generalization of Proposition \ref{BH0}, without monotonicity conditions on the weights. We will need the {\it level function} $u^o$ of $u$ defined by requiring that the function $x\mapsto\int_0^x u^o$ is the least concave majorant of $x\mapsto\int_0^x u$. Note that $u^o$ is non-increasing and it coincides with $u$ when $u$ is non-increasing. For properties of the level function, see \cite{MS} and the references therein.

\begin{thm} Let $1<p\leq q <\infty$ and $q\geq 2$. Assume $u$ and $w$ are weight functions on $(0,\infty)$. If 
\<\label{levelcond}
\sup_{x>0} \bigg(\int_0^{1/x} u^o \bigg)^{1/q} \bigg( \int_0^x w^{1-p'} \bigg)^{1/p'} < \infty ,
\>
then there exists $C>0$ such that (\ref{LL}) holds for all $f\in L^1(\mathbb{R}^n)$.
\end{thm}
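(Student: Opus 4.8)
The plan is to reduce the general-weight inequality (\ref{LL}) to the monotone-weight machinery of Section \ref{R2H} by replacing $u$ with its level function $u^o$, which \emph{is} non-increasing, and then invoking the Hardy-inequality characterization of Proposition \ref{Main Hardy Weights}. The key observation making this work is that neither the Jodeit--Torchinsky estimate (\ref{JT46}) nor the reduction in Theorem \ref{Thm1}\ref{1} requires $u$ or $w$ to be monotone: the only place monotonicity entered earlier (the remark after Lemma \ref{MHD}) was to pass from $f^*$ to general $f$ in (\ref{Main Hardy}), which is irrelevant here since we apply the Hardy inequality directly to the non-increasing function $f^*$. So this theorem should follow by reassembling results already proved, with one genuinely new input.

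First I would record the defining property of the level function. Since $x\mapsto\int_0^x u^o$ is the least concave majorant of $x\mapsto\int_0^x u$, we have $\int_0^x u\le\int_0^x u^o$ for every $x>0$, and hence the basic level-function domination (see \cite{MS})
\[
\int_0^\oo h\,u\le\int_0^\oo h\,u^o
\]
holds for every non-negative non-increasing $h$ on $(0,\oo)$. Applying this with $h=(\hat f)^{*q}$, which is non-increasing because $(\hat f)^*$ is, gives at once $\|\hat f\|_{\Lambda_q(u)}\le\|\hat f\|_{\Lambda_q(u^o)}$. This replaces the arbitrary weight $u$ by the non-increasing weight $u^o$, enlarging the left side, which is exactly the direction we need.

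Next I would observe that hypothesis (\ref{levelcond}) is precisely condition (a) of Proposition \ref{Main Hardy Weights} for the weight pair $(u^o,w)$ in the range $1<p\le q$, so the Hardy inequality (\ref{Main Hardy}) holds with $u$ replaced by $u^o$, for some finite $C_1$. Since $q\ge2$, the estimate \cite{JT}*{Theorem 4.7} used in the proof of Theorem \ref{Thm1}\ref{1} yields
\[
\int_0^\oo (\hat f)^{*q}u^o\le D^{q/2}\int_0^\oo\bigg(\int_0^{1/t}f^*\bigg)^q u^o(t)\,dt,
\]
and feeding $f^*$ into the Hardy inequality for $(u^o,w)$ bounds the right-hand side by $\big(D^{1/2}C_1\|f\|_{\Lambda_p(w)}\big)^q$. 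Chaining this with the level-function estimate of the previous paragraph gives
\[
\|\hat f\|_{\Lambda_q(u)}\le\|\hat f\|_{\Lambda_q(u^o)}\le D^{1/2}C_1\|f\|_{\Lambda_p(w)},
\]
which is (\ref{LL}) with $C=D^{1/2}C_1$. As a sanity check, when $u$ is already non-increasing one has $u^o=u$ and (\ref{levelcond}) collapses to (\ref{BH0cond}), recovering the relevant case of Proposition \ref{BH0}.

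The only essential new ingredient is the level-function domination in the second step; the rest is bookkeeping. Accordingly, the main point to verify with care is that this inequality genuinely applies to $h=(\hat f)^{*q}$ and moves in the enlarging direction, so that the substitution $u\mapsto u^o$ is legitimate. Once that is secured, the fact that $w$ need not be monotone causes no trouble, because Proposition \ref{Main Hardy Weights} and the Jodeit--Torchinsky estimate both hold for arbitrary non-negative weights.
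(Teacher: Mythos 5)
Your proof is correct, and it reaches the theorem by a genuinely different route than the paper's. The paper sets $\sigma(t)=t^{q-2}u^o(1/t)$, checks the condition (\ref{level}) for the pair $(u,\sigma)$ with $p=q$ (using concavity of $t\mapsto\int_0^t u^o$ together with (\ref{sigma})), and invokes the $\Gamma$-space result \cite{S1}*{Theorem 3.4} to get $\|\hat f\|_{\Lambda_q(u)}\le C_7\|f\|_{\Gamma_q(\sigma)}$; it then applies \cite{B}*{Theorem 1} to get $\|f\|_{\Gamma_q(\sigma)}\le C_8\|f\|_{\Lambda_p(w)}$. The substitution $t\mapsto1/t$ shows that $\int_0^\oo\big(\int_0^{1/t}f^*\big)^qu^o(t)\,dt=\|f\|_{\Gamma_q(\sigma)}^q$, so your second step --- Proposition \ref{Main Hardy Weights}(a) for the pair $(u^o,w)$, whose hypothesis is exactly (\ref{levelcond}) --- is literally the same Bradley-type estimate as the paper's second step, written in different variables. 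Where you genuinely diverge is the first step: in place of the citation of \cite{S1}*{Theorem 3.4}, you prove $\|\hat f\|_{\Lambda_q(u)}\le D^{1/2}\big(\int_0^\oo\big(\int_0^{1/t}f^*\big)^qu^o(t)\,dt\big)^{1/q}$ directly, via the level-function domination $\int_0^\oo h\,u\le\int_0^\oo h\,u^o$ for non-increasing $h$ (legitimate, since $\int_0^xu\le\int_0^xu^o$ for all $x$ and the $\chi_{(0,t)}$ version of Hardy's lemma applies) followed by the weighted Jodeit--Torchinsky estimate with the non-increasing weight $u^o$. What your route buys is a self-contained argument assembled from tools already proved or quoted in the earlier sections, and an explicit location for the hypothesis $q\ge2$ (the convexity exponent $q/2\ge1$ behind \cite{JT}*{Theorem 4.7}), which in the paper's proof is absorbed into the appeal to \cite{S1}. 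What the paper's route buys is the visible connection to the $\Gamma$-space inequality (\ref{GL}), which is the organizing theme of Section \ref{Lambda}.

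Two corrections, neither fatal. First, your framing remarks --- that the reduction in part \ref{1} of Theorem \ref{Thm1} requires no monotonicity of $u$, and that the Jodeit--Torchinsky estimate ``holds for arbitrary non-negative weights'' --- are false as stated. The weighted inequality $\int_0^\oo(\hat f)^*(t)^qu(t)\,dt\le D^{q/2}\int_0^\oo\big(\int_0^{1/t}f^*\big)^qu(t)\,dt$ follows from (\ref{JT46}) by weak majorization precisely because $u$ is non-increasing; for arbitrary $u$ it would amount to the pointwise bound $(\hat f)^*(t)\le D^{1/2}\int_0^{1/t}f^*$, which would make (\ref{Main Hardy}) alone sufficient for (\ref{Lorentz}) for every $q>0$, contradicting Example \ref{Ex2}. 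Your proof survives because you apply the estimate only to $u^o$, which is non-increasing, but the remarks should be deleted. Second, you should dispose of the degenerate case as the paper does: if $\int_0^{1/x}u^o=\oo$ for some $x>0$, then by concavity this holds for all $x>0$, and (\ref{levelcond}) forces $\int_0^xw^{1-p'}=0$ for all $x$, so $\|f\|_{\Lambda_p(w)}=\oo$ unless $f^*=0$ and (\ref{LL}) is trivial; with that case set aside, all quantities in your application of Proposition \ref{Main Hardy Weights} are finite and the chain of inequalities reads cleanly.
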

\begin{proof} If $\int_0^{1/x} u^o$ is infinite for some $x>0$, concavity shows that it is infinite for all $x>0$. But then the finiteness of (\ref{levelcond}) implies $w$ is infinite almost everywhere so the inequality (\ref{LL}) holds trivially. Henceforth, we assume that $\int_0^{1/x} u^o (t)\, dt<\infty$ for $x>0$. 

Since the concave function $t \mapsto \int_0^t u^o$ is absolutely continuous, we may set
\[
\sigma (t) = t^{q-2} u^o(1/t)  = - t^q \, \dfrac{d}{dt} \bigg(\int_0^{1/t} u^o\bigg)
\]
to get
\<\label{sigma}
\int_x^{\infty} \dfrac{\sigma (t)}{t^q} \, dt =\int_0^{1/x} u^o.
\>
Thus, for any $x<y$,
\[
\frac 1y\int_0^y u\le\frac 1y\int_0^y u^o\le\frac1x\int_0^x u^o
=\frac1x\int_{1/x}^{\infty} \dfrac{\sigma (t)}{t^q} \, dt.
\]
It follows that
\[
\sup_{x<y}\bigg(\frac xy\int_0^y u\bigg)^{1/q}\bigg(x^q\int_0^{1/x}\sigma(t)\,dt+\int_{1/x}^\oo \sigma(t)\,\frac{dt}{t^q}\bigg)^{-1/q}\le 1<\oo.
\]
So, as we have seen above, \cite{S1}*{Theorem 3.4} shows there is a constant $C_7$ such that
\[
\|\hat f\|_{\Lambda_q(u)}\le C_7\|f\|_{\Gamma_q(\sigma)}.
\]

Combining (\ref{sigma}) with the hypothesis (\ref{levelcond}) shows that
\[
\sup_{x>0} \bigg(\int_x^{\infty} \dfrac{\sigma (t)}{t^q} \, dt\bigg)^{1/q} \bigg( \int_0^x w(t)^{1-p'}\,dt \bigg)^{1/p'} < \infty,
\]
which, by \cite{B}*{Theorem 1}, implies there exists a constant $C_8$ such that
\[
\|f\|_{\Gamma_q(\sigma)}\le C_8\|f\|_{\Lambda_p(w)}.
\]
Taking $C=C_7C_8$ completes the proof.
\end{proof}
\section{Other Fourier transforms}\label{Other}

Suppose $(X,\lambda)$ and $(Y,\nu)$ are $\sigma$-finite measure spaces and $T$ is a linear map defined on $L^1(X)+L^2(X)$ taking values in $L^\oo(X)+L^2(Y)$. Further suppose that $T: L^1(X)\to L^\oo(Y)$ and $T: L^2(X)\to L^2(Y)$ are bounded maps. Then $T$ has a uniquely defined dual map $T'$ such that $T': L^1(Y)\to L^\oo(X)$ and $T': L^2(Y)\to L^2(X)$ are bounded maps.

The results of the previous section apply with the operator $T$ in place of the Fourier transform, as only the boundedness properties of the Fourier transform were used in an essential way. (Lemma \ref{LD} used the self-duality of the Fourier transform, but it is easy to see that the boundedness properties of $T'$ will suffice.) Here we understand that $U$ is a weight on $Y$ and $u$ is the rearrangement of $U$ with respect to the measure $\nu$. Also, $W$ is a weight on $X$ and $1/w$ is the rearrangement of $1/W$ with respect to the measure $\lambda$.

In particular, Propositions \ref{Lorentz implies Lebesgue} and \ref{BH0}, and Theorems \ref{Thm1}, \ref{Thm2}, and \ref{Summary} remain valid for the Fourier transform taken over any locally compact abelian group.

Depending on the underlying measures $\lambda$ and $\nu$, further simplification of the sufficient weight conditions may be possible. This is because the range of the rearrangement may not include all decreasing functions. For example, rearranging a function on a space of finite measure gives a decreasing function supported in a finite subinterval of $[0,\oo)$. Another example is a sequence, viewed as a function over a space with counting measure; its rearrangement is a decreasing function that is constant on $[k,k+1)$ for $k=0,1,2,\dots$. (Naturally, this function may be identified with the decreasing sequence of its values.) 

In the case of general measure spaces more complicated restrictions on the range of the rearrangement are possible. But for Haar measure on locally compact abelian groups these two examples are the only ones possible. (However, the two may combine; in the case of the finite Fourier transform both $\lambda$ and $\nu$ are counting measure on a finite set.)

To illustrate the kinds of simplifications that may be expected, we consider the specific case of the Fourier transform on $\mathbb T^n$  In this case the measure $\lambda$ may be identified with Lebesgue measure on $[0,1]^n$ and the measure $\nu$ is counting measure on $\mathbb Z^n$. The weight $U$ is defined on $\mathbb Z^n$ and therefore $u=U^*$ is constant on $[k,k+1)$ for $k=0,1,2,\dots$. The weight $W$ is supported on a set of measure $1$ so $w$, defined by $1/w=(1/W)^*$, is an increasing function on $[0,\oo)$ that takes the value $\oo$ on $[1,\oo)$. 

With these weights, the condition (\ref{MH}) is sufficient, in the appropriate range of Lebesgue indices, to imply the inequality corresponding to (\ref{Lebesgue}) for the Fourier transform on $\mathbb T^n$, namely
\[
\bigg(\sum_{y\in\mathbb Z^n} |\hat f(y)|^qU(y)\bigg)^{1/q}
\le C\bigg(\int_{\mathbb T^n}|f(x)|^pW(x)\,dx\bigg)^{1/p}.
\]
But $w^{1-p'}$ vanishes on $[1,\oo)$ and $u$ is a step function taking values $u(0), u(1),\dots$; essentially a sequence. It is natural to replace (\ref{MH}) by an equivalent condition in a form that recognizes these facts. One choice is,
\<\label{FS}
\bigg(u(0)^{r/q}\bigg(\int_0^1 w^{1-p'}\bigg)^{r/p'}+\sum_{k=1}^\oo\bigg(\sum_{j=0}^{k-1}u(j)\bigg)^{r/p}\bigg(\int_0^{1/k} w^{1-p'}\bigg)^{r/p'}u(k)\bigg)^{1/r}<\oo.
\>
To verify the equivalence, first observe that 
\[
\int_0^1\bigg(\int_0^xu\bigg)^{r/p}\bigg(\int_0^{1/x} w^{1-p'}\bigg)^{r/p'}u(x)\,dx=\frac qr u(0)^{r/p}\bigg(\int_0^1 w^{1-p'}\bigg)^{r/p'}u(0).
\]
Next, note that  $\frac1x\int_0^xu$ is non-increasing and $x\int_0^{1/x}w^{1-p'}$ is non-decreasing  so if $1\le k\le x<k+1$, then $x/k\le2$ and we have
\[
\frac12\int_0^xu\le\int_0^ku\le\int_0^xu\quad\text{and}\quad
\int_0^{1/x}w^{1-p'}\le\int_0^{1/k}w^{1-p'}\le2\int_0^{1/x}w^{1-p'}.
\]
Thus, for $k=1,2,\dots$,
\[
\int_k^{k+1}\bigg(\int_0^xu\bigg)^{r/p}\bigg(\int_0^{1/x} w^{1-p'}\bigg)^{r/p'}u(x)\,dx
\sim\bigg(\int_0^ku\bigg)^{r/p}\bigg(\int_0^{1/k} w^{1-p'}\bigg)^{r/p'}u(k).
\]
The notation $A\sim B$, in this case, means that $2^{-r/p}A\le B\le 2^{r/p'}A$.

Since $\int_0^ku=\sum_{j=0}^{k-1}u(j)$ we can sum over $k$ to see that (\ref{MH}) is equivalent to (\ref{FS}).

A similar analysis shows that the sufficient condition (\ref{BH0cond}) is equivalent to
\[
\sup_{k=1,2,\dots}\bigg(\sum_{j=0}^{k-1}u(j)\bigg)^{1/q}\bigg(\int_0^{1/k} w^{1-p'}\bigg)^{1/p'}<\oo.
\]
Corresponding reductions may be carried out for all the weight conditions encountered in Theorem \ref{Summary}.

\section{Examples}\label{counterexamples}

In this section we produce explicit weights $U$ and $W$, depending on indices $p$ and $q$ with $1<q<2<p<\oo$, for which the Hardy inequality (\ref{Main Hardy}) holds but the Fourier inequality (\ref{Lebesgue}) fails. For simplicity only the case $n=1$ is considered.

Note that, by Proposition \ref{Main Hardy Weights}, proving (\ref{MH}) is enough to show that (\ref{Main Hardy}) holds.

Example \ref{Ex1}, suggested by \cite{G}*{Exercise 3.1.6}, looks at the case of the Fourier transform on $\mathbb T$, where the compactness of $\mathbb T$ permits a straightforward argument. Example \ref{Ex2} uses the same basic approach but is technically more complicated. A Gaussian function is introduced to ensure convergence in the absence of compactness in the domain space.

\begin{ex}\label{Ex1} Suppose $p$, $q$ and $r$ satisfy $1<q<2<p<\oo$ and $1/r=1/q-1/p$. Take $\alpha,\beta\in (0,1)$ so that
\[
\frac12<\frac\beta q<\frac\alpha{p'}<\frac1q<1.
\]
Let $U(k)=(|k|+1)^{\beta-1}$, $k\in\mathbb Z$, and $W(x)=x^{(1-\alpha)(p-1)}$, $0\le x<1$. Let $u$ be the rearrangement of $U$ with respect to counting measure on $\mathbb Z$ and define $w$ by $1/w=(1/W)^*$ with respect to Lebesgue measure on $\mathbb T=[0,1]$. Then condition (\ref{MH}) holds but the Fourier series inequality
\[ 
\bigg(\sum_{k\in \mathbb Z} |\hat f(k)|^qU(k)\bigg)^{1/q}
\le C\bigg(\int_0^1|f(x)|^pW(x)\,dx\bigg)^{1/p}, \quad f\in L^1(\mathbb T),
\]
fails to hold for any $C$.
\end{ex}
\begin{proof}
Observe that $w(t)=t^{(1-\alpha)(p-1)}$ for $0\le t<1$ and $w(t)=\oo$ for $t>1$. Also, $u(t)=1$ for $0\le t<1$, and $u(t)=(k+1)^{\beta-1}$ for $2k-1\le t<2k+1$, $k=1,2,\dots$. It follows that $u(t)\le((t+1)/2)^{\beta-1}$ and easy estimates show that (\ref{MH}) holds.

On the other hand, completing \cite{G}*{Exercise 3.1.6} shows that the sum
\[
g(x)=\sum_{k=2}^\oo k^{-1/2}(\log k)^{-2}e^{ik\log k}e^{2\pi ikx}
\]
defines a continuous function on $\mathbb T$. In particular, $g$ is bounded on $[0,1]$ and thus $\|g\|_{L^p(W)}<\oo$ because $W$ is integrable. However, $\hat g(k)=k^{-1/2}(\log k)^{-2}e^{ik\log k}$ for $k=2,3,\dots$, so
\[
\|\hat g\|_{\ell^q(U)}^q\ge\sum_{k=2}^\oo k^{-q/2}(\log k)^{-2q}(|k|+1)^{\beta-1}=\oo.
\]
Thus, the Fourier series inequality fails with $f=g$.
\end{proof}
Before beginning with our second example, we set up to use van der Corput's lemma several times.
\begin{lem}\label{vdC} Suppose $1<a<b$, $x\in \mathbb R$, and $f\ge0$ is continuously differentiable. If $f$ is decreasing on $[a,b]$ then
\[
\bigg|\int_a^b f(y)e^{iy\log(y)}e^{2\pi ixy}\,dy\bigg|\le24a^{1/2}f(a)+12\int_a^by^{-1/2}f(y)\,dy.
\]
If $f$ is increasing on $[a,b]$ then
\[
\bigg|\int_a^b f(y)e^{iy\log(y)}e^{2\pi ixy}\,dy\bigg|\le24b^{1/2}f(b).
\]
\end{lem}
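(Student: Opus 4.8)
The plan is to separate the oscillation from the amplitude $f$. Writing the phase as $\phi(y)=y\log y+2\pi x y$, so that the integrand is $f(y)e^{i\phi(y)}$, I compute $\phi'(y)=\log y+1+2\pi x$ and $\phi''(y)=1/y>0$. The crucial feature is that $\phi''$ is positive throughout, so van der Corput's second-derivative test applies on every subinterval, whether or not the stationary point $\phi'=0$ falls inside it. First I would record the unweighted estimate: setting $I(s)=\int_a^s e^{i\phi(y)}\,dy$, van der Corput's lemma on $[a,s]$, where $\phi''(y)=1/y\ge 1/s$, gives $|I(s)|\le C s^{1/2}$ for $a<s\le b$ with an explicit absolute constant $C$ (depending only on the lower bound for $\phi''$, not on the length of the interval). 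Tracking the constant through the standard proof yields $C=12$, which is what produces the stated numbers.

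With this in hand, the weighted integral is handled by integration by parts against $I$. Since $I(a)=0$ and $I'(y)=e^{i\phi(y)}$, I obtain $\int_a^b f(y)e^{i\phi(y)}\,dy=f(b)I(b)-\int_a^b f'(y)I(y)\,dy$. The whole matter now reduces to elementary monotonicity estimates, split according to the sign of $f'$.

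For decreasing $f$ (so $f'\le 0$) I would bound $|f(b)I(b)|\le 12\,b^{1/2}f(b)$ and $\int_a^b|f'(y)|\,|I(y)|\,dy\le 12\int_a^b(-f'(y))\,y^{1/2}\,dy$, then close with two one-line computations: integrating $\tfrac{d}{dy}\bigl(y^{1/2}f(y)\bigr)$ and discarding the nonpositive term $y^{1/2}f'(y)$ gives $b^{1/2}f(b)\le a^{1/2}f(a)+\tfrac12\int_a^b y^{-1/2}f$, while integration by parts in $\int_a^b(-f')y^{1/2}$ gives $a^{1/2}f(a)-b^{1/2}f(b)+\tfrac12\int_a^b y^{-1/2}f\le a^{1/2}f(a)+\tfrac12\int_a^b y^{-1/2}f$; adding the two contributions yields exactly $24\,a^{1/2}f(a)+12\int_a^b y^{-1/2}f$. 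For increasing $f$ (so $f'\ge 0$) the same identity is simpler: $|f(b)I(b)|\le 12\,b^{1/2}f(b)$ as before, and $\bigl|\int_a^b f'(y)I(y)\,dy\bigr|\le 12\,b^{1/2}\int_a^b f'(y)\,dy=12\,b^{1/2}(f(b)-f(a))\le 12\,b^{1/2}f(b)$, using $y^{1/2}\le b^{1/2}$ and $f\ge 0$; summing gives $24\,b^{1/2}f(b)$.

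The main obstacle is the unweighted estimate $|I(s)|\le 12\,s^{1/2}$: one must invoke van der Corput's second-derivative test in a form valid on an arbitrary subinterval, so that the constant depends only on the lower bound $1/s$ for $\phi''$ and not on the length of $[a,s]$ or the location of the stationary point, and one must keep the constant small enough that the final bookkeeping produces precisely $24$ and $12$. Everything after that is routine integration by parts and the two monotonicity inequalities above.
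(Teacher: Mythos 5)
Your proposal is correct and takes essentially the same route as the paper: both derive $\bigl|\int_a^s e^{iy\log y}e^{2\pi ixy}\,dy\bigr|\le Cs^{1/2}$ on arbitrary subintervals from van der Corput's second-derivative test (the paper rescales by the right endpoint to apply Grafakos's Proposition 2.6.7(b) with constant $24$, while you use the lower bound $\phi''(y)=1/y\ge 1/s$ directly, and the standard proof does give a constant $4\sqrt{2}<12$), and then both strip off the weight $f$ by what is the same computation --- your integration by parts against $I(s)=\int_a^s e^{i\phi}$ is exactly the paper's decomposition $f(y)=f(b)+\int_y^b|f'|$ (or $f(y)=f(a)+\int_a^y f'$) followed by Fubini --- split according to the sign of $f'$. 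The only differences are bookkeeping: with your constant $12$ the decreasing case could even be sharpened to $12\,a^{1/2}f(a)+6\int_a^b y^{-1/2}f$ by retaining the $-12\,b^{1/2}f(b)$ cancellation, but discarding terms as you do reproduces the stated constants.
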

\begin{proof} Suppose $[\bar a,\bar b]\subseteq[a,b]$. Letting $y\mapsto \bar by$, we have
\[
\int_{\bar a}^{\bar b} e^{iy\log(y)}e^{2\pi ixy}\,dy
=\bar b\int_{\bar a/\bar b}^1 e^{i\bar by(\log(y)+\log(\bar b)+2\pi x)}\,dy.
\]
The second derivative of $y(\log(y)+\log(\bar b)+2\pi x)$ is $1/y$, which is at least 1 when $0<y\le1$. So the van der Corput lemma \cite{G}*{Proposition 2.6.7(b)} (with $k=2$) shows that
\[
\bigg|\int_{\bar a}^{\bar b} E\,dy\bigg|\le \bar b(24\bar b^{-1/2})=24\bar b^{1/2},
\]
where we have written $E=e^{iy\log(y)}e^{2\pi ixy}$ for simplicity. 

First suppose that $f$ is decreasing. Then $-f'(z)=|f'(z)|$ for $a<z<b$ so 
\begin{align*}
\bigg|\int_a^b f(y)E\,dy\bigg|&=\bigg|\int_a^b \bigg(f(b)+\int_y^b |f'(z)|\,dz\bigg)E\,dy\bigg|\\
&\le f(b)\bigg|\int_a^b E\,dy\bigg|+\int_a^b \bigg|\int_a^z E\,dy\bigg| |f'(z)|\,dz\\
&\le 24b^{1/2}f(b)+24\int_a^b z^{1/2}|f'(z)|\,dz\\
&= 24b^{1/2}f(b)+24\bigg( a^{1/2}f(a)-b^{1/2}f(b)+\frac12\int_a^b y^{-1/2}f(y)\,dy\bigg)\\
&= 24a^{1/2}f(a)+12\int_a^b y^{-1/2}f(y)\,dy.
\end{align*}
Next suppose $f$ is increasing. Then $f'(z)\ge0$ for $a<z<b$ so 
\begin{align*}
\bigg|\int_a^b f(y)E\,dy\bigg|&=\bigg|\int_a^b \bigg(f(a)+\int_a^y f'(z)\,dz\bigg)E\,dy\bigg|\\
&\le f(a)\bigg|\int_a^b E\,dy\bigg|+\int_a^b \bigg|\int_z^b E\,dy\bigg| f'(z)\,dz\\
&\le 24b^{1/2}f(a)+24\int_a^b b^{1/2}f'(z)\,dz\\
&= 24b^{1/2}f(b).\qedhere
\end{align*}
\end{proof}
\begin{ex}\label{Ex2} Suppose $p$, $q$ and $r$ satisfy $1<q<2<p<\oo$ and $1/r=1/q-1/p$. Take $\alpha,\beta\in (0,1)$ so that
\[
\frac12<\frac\beta q<\frac\alpha{p'}<\frac1q<1.
\]
Let $U(y)=(|y|+1)^{\beta-1}$ for $y\in \mathbb R$ and $W(x)=|x|^{(1-\alpha)(p-1)}$ for $x\in \mathbb R$. Define $u$ and $w$ by $u=U^*$ and $1/w=(1/W)^*$. Then condition (\ref{MH}) holds but the Fourier inequality (\ref{Lebesgue}) fails to hold for any constant $C$.
\end{ex}
\begin{proof}
Observe that $w(t)=(t/2)^{(1-\alpha)(p-1)}$ and $u(t)=((t/2)+1)^{\beta-1}$. With these in hand, easy estimates show that (\ref{MH}) holds.

For each $K>2+e$, let $\gamma=(2/3)(1+\log K)^{3/2}$ and define $m,n\in (0,1)$ by requiring that $(1+\log K)m=\frac\pi3$ and $(1+\log K)n=\frac\pi2$. Take
\[
h(x)=\frac{\sqrt{2\pi}}\gamma e^{-2\pi^2(x/\gamma)^2}\quad\text{to get}\quad
\hat h(y)=e^{-\frac12y^2\gamma^2}.
\]
Also, take
\[
g(x)=h(x)\int_e^Ky^{-1/2}\log(y)^{-2}e^{iy\log(y)}e^{2\pi ixy}\,dy.
\]
Clearly $g\in L^1$ and 
\begin{align*}
\hat g(z)=\int_{-\oo}^\oo g(x)e^{-2\pi ixz}\,dx
&=\int_e^Ky^{-1/2}\log(y)^{-2}e^{iy\log(y)}\hat h(y-z)\,dy\\
&=\int_e^Ky^{-1/2}\log(y)^{-2}e^{iy\log(y)}e^{-\frac12(y-z)^2\gamma^2}\,dy.
\end{align*}

First we estimate $\|g\|_{L^p(W)}$. Since $y^{-1/2}\log(y)^{-2}$ is decreasing for $y\ge e$, Lemma \ref{vdC} gives,
\[
\bigg|\int_e^Ky^{-1/2}\log(y)^{-2}e^{iy\log(y)}e^{2\pi ixy}\,dy\bigg|
\le24+12\int_e^K\log(y)^{-2}\,\frac{dy}y\le 36.
\]
Using this estimate, and making the substitution $x\mapsto\gamma x$, yields
\[
\|g\|_{L^p(W)}\le 36\|h\|_{L^p(W)}=\gamma^{-\alpha/p'}36\sqrt{2\pi}\bigg(\int_{-\oo}^\oo e^{-2\pi^2px^2}|x|^{(1-\alpha)(p-1)}\,dx\bigg)^{1/p}.
\]
Since $\alpha/p'>0$ and $(1-\alpha)(p-1)>-1$, this norm goes to zero as $K\to\oo$.

Estimating $\|\hat g\|_{L^q(U)}$ is next. Suppose $e+1<z<K-1$. Then,
\begin{align*}
|\hat g(z)|
&\ge \bigg|\int_{z-n}^{z+n}y^{-1/2}\log(y)^{-2}e^{-\frac12(y-z)^2\gamma^2}e^{iy\log(y)}\,dy\bigg|\\
&-\bigg|\int_e^{z-n}y^{-1/2}\log(y)^{-2}e^{-\frac12(y-z)^2\gamma^2}e^{iy\log(y)}\,dy\bigg|\\
&-\bigg|\int_{z+n}^Ky^{-1/2}\log(y)^{-2}e^{-\frac12(y-z)^2\gamma^2}e^{iy\log(y)}\,dy\bigg|\\
&\equiv A-B_1-B_2.
\end{align*}

To estimate $A$, we will multiply by $1=|e^{-iz\log(z)}|$ and then reduce the modulus to its real part.
This is justified by the mean value theorem: If $z-n<y<z+n$ then for some $\bar y\in (z-n,z+n)\subseteq(e,K)$, 
\[
|y\log(y)-z\log(z)|=(1+\log \bar y)|y-z|\le(1+\log K)n=\pi/2
\]
so $\cos(y\log(y)-z\log(z))\ge0$. Similarly, if $z-m<y<z+m$ then $|y\log(y)-z\log(z)|\le\pi/3$ so $\cos(y\log(y)-z\log(z))\ge1/2$. We have, 
\begin{align*}
A&=\bigg|\int_{z-n}^{z+n}y^{-1/2}\log(y)^{-2}e^{-\frac12(y-z)^2\gamma^2}e^{iy\log(y)-iz\log(z)}\,dy\bigg|\\
&\ge\int_{z-n}^{z+n}y^{-1/2}\log(y)^{-2}e^{-\frac12(y-z)^2\gamma^2}\cos(y\log(y)-z\log(z))\,dy
\\
&\ge\frac12\int_{z-m}^zy^{-1/2}\log(y)^{-2}e^{-\frac12(y-z)^2\gamma^2}\,dy\\
&\ge\frac12z^{-1/2}(\log z)^{-2}\int_{z-m}^ze^{-\frac12(y-z)^2\gamma^2}\,dy\\
&=\frac12z^{-1/2}(\log z)^{-2}\gamma^{-1}\int_0^{m\gamma}e^{-\frac12y^2}\,dy\\
&\ge\frac14z^{-1/2}(\log z)^{-2}\gamma^{-1}.
\end{align*}
The last estimate uses the fact that $m\gamma=(\pi/3)(2/3)(1+\log K)^{1/2}>1$ to show that 
\[
\int_0^{m\gamma}e^{-\frac12y^2}\,dy\ge\int_0^1e^{-\frac12}\,dy\ge\frac12.
\]

To estimate $B_1$ we apply Lemma \ref{vdC} with $x=0$.  Since $n\gamma^2=(\pi/2)(2/3)^2(1+\log K)^2\ge5/(2e)$ we can show that $y^{-1/2}\log(y)^{-2}e^{-\frac12(y-z)^2\gamma^2}$ is an increasing function of $y$ on $[e,z-n]$ by estimating the derivative of its logarithm to get
\[
-\frac1{2y}-\frac2{y\log(y)}+(z-y)\gamma^2\ge-\frac1{2e}-\frac2e+n\gamma^2\ge0.
\]
Thus,
\[
B_1\le24 \log(z-n)^{-2}e^{-\frac12n^2\gamma^2}\le24e^{-\frac12n^2\gamma^2}
=24e^{-\frac12(\frac\pi2)^2(\frac23)^2(1+\log K)}\le24K^{-\pi^2/18}.
\]

For $y>z$ it is clear that the function $y^{-1/2}\log(y)^{-2}e^{-\frac12(y-z)^2\gamma^2}$ is decreasing, so we may use Lemma \ref{vdC}, with $x=0$, to estimate $B_2$ as well. We have
\begin{align*}
B_2&\le24\log(z+n)^{-2}e^{-\frac12n^2\gamma^2}+12\int_{z+n}^K\log(y)^{-2}e^{-\frac12(y-z)^2\gamma^2}\,\frac{dy}y\\
&\le 24\log(z+n)^{-2}e^{-\frac12n^2\gamma^2}
+12e^{-\frac12n^2\gamma^2}\log(z+n)^{-1}\le36K^{-\pi^2/18}.
\end{align*}

Since $\pi^2/18>1/2$, 
\[
\frac12A\ge \frac18K^{-1/2}(\log K)^{-2}\frac32(1+\log K)^{-3/2}\ge 60 K^{-\pi^2/18}=B_1+B_2.
\]
for $K$ sufficiently large. It follows that for such $K$, and $z\in (e+1,K-1)$,
\[
|\hat g(z)|\ge A-B_1-B_2\ge\frac12A\ge\frac18z^{-1/2}(\log z)^{-2}\gamma^{-1}.
\]
Now
\begin{align*}
\|\hat g\|_{L^q(U)}&\ge \frac18\gamma^{-1}\bigg(\int_{e+1}^{K-1} 
z^{-q/2}(\log z)^{-2q}(z+1)^{\beta-1}\,dz\bigg)^{1/q}\\
&\ge \frac3{16}(1+\log K)^{-3/2}(1+\log K)^{-2}\bigg(\int_{e+1}^{K-1} 
(z+1)^{-q/2+\beta-1}\,dz\bigg)^{1/q}\\
&= \frac3{16}(1+\log K)^{-7/2}\Big(\beta-\frac q2\Big)^{-1/q}
(K^{\beta-q/2}-(e+2)^{\beta-q/2})^{1/q}.
\end{align*}
This goes to infinity as $K\to\oo$. We conclude that there is no finite constant $C$ such that $\|\hat f\|_{L^q(U)}\le C\|f\|_{L^p(W)}$ for all $f\in L^1$.
\end{proof}

As we have pointed out earlier, the weight conditions (\ref{MH}) and (\ref{alt}) are equivalent when $q>1$. Thus the weight condition of \cite{BH2003}*{Theorem 1(ii)} holds in the example above, yet the Fourier inequality fails. 

The weight condition of \cite{BH2003}*{Theorem 4(iv)} holds as well because, as we have seen in Section \ref{Comp14}, the weight conditions of \cite{BH2003}*{Theorems 1 and 4} are equivalent whenever the appropriate a priori condition holds. To see that the weight $u$ of Example \ref{Ex2} satisfies $u^{1-q'}\in B_{q'}$, observe that the continuous function of $x$ given by
\[
\frac{x^{q'}\int_x^\oo \big(\frac t2+1\big)^{(1-\beta)(q'-1)}t^{-q'}\,dt}{\int_0^x\big(\frac t2+1\big)^{(1-\beta)(q'-1)}\,dt}
\]
is bounded as $x\to0$ and as $x\to\oo$. The details are omitted.

\begin{bibdiv}
\begin{biblist}
\bib{AM}{article}{
   author={Ari{\~n}o, Miguel A.},
   author={Muckenhoupt, Benjamin},
   title={Maximal functions on classical Lorentz spaces and Hardy's
   inequality with weights for nonincreasing functions},
   journal={Trans. Amer. Math. Soc.},
   volume={320},
   date={1990},
   number={2},
   pages={727--735},
   issn={0002-9947},
   doi={10.2307/2001699},
}
\bib{BH}{article}{
   author={Benedetto, J. J.},
   author={Heinig, H. P.},
   title={Weighted Hardy spaces and the Laplace transform},
   conference={
      title={Harmonic analysis},
      address={Cortona},
      date={1982},
   },
   book={
      series={Lecture Notes in Math.},
      volume={992},
      publisher={Springer, Berlin},
   },
   date={1983},
   pages={240--277},
   doi={10.1007/BFb0069163},
}
\bib{BH2003}{article}{
   author={Benedetto, John J.},
   author={Heinig, Hans P.},
   title={Weighted Fourier inequalities: new proofs and generalizations},
   journal={J. Fourier Anal. Appl.},
   volume={9},
   date={2003},
   number={1},
   pages={1--37},
   issn={1069-5869},
   doi={10.1007/s00041-003-0003-3},
}
\bib{BHJ}{article}{
   author={Benedetto, J. J.},
   author={Heinig, H. P.},
   author={Johnson, R.},
   title={Weighted Hardy spaces and the Laplace transform. II},
   journal={Math. Nachr.},
   volume={132},
   date={1987},
   pages={29--55},
   issn={0025-584X},
   doi={10.1002/mana.19871320104},
}
\bib{BS}{book}{
   author={Bennett, Colin},
   author={Sharpley, Robert},
   title={Interpolation of operators},
   series={Pure and Applied Mathematics},
   volume={129},
   publisher={Academic Press, Inc., Boston, MA},
   date={1988},
   pages={xiv+469},
   isbn={0-12-088730-4},
}
\bib{B}{article}{
   author={Bradley, J. Scott},
   title={Hardy inequalities with mixed norms},
   journal={Canad. Math. Bull.},
   volume={21},
   date={1978},
   number={4},
   pages={405--408},
   issn={0008-4395},
   doi={10.4153/CMB-1978-071-7},
}
\bib{GS}{article}{
   author={Gogatishvili, Amiran},
   author={Stepanov, Vladimir D.},
   title={Reduction theorems for operators on the cones of monotone
   functions},
   journal={J. Math. Anal. Appl.},
   volume={405},
   date={2013},
   number={1},
   pages={156--172},
   issn={0022-247X},
   doi={10.1016/j.jmaa.2013.03.046},
}
\bib{G}{book}{
   author={Grafakos, Loukas},
   title={Classical Fourier analysis},
   series={Graduate Texts in Mathematics},
   volume={249},
   edition={2},
   publisher={Springer, New York},
   date={2008},
   pages={xvi+489},
   isbn={978-0-387-09431-1},
}
\bib{H1}{article}{
   author={Heinig, Hans P.},
   title={Weighted norm inequalities for classes of operators},
   journal={Indiana Univ. Math. J.},
   volume={33},
   date={1984},
   number={4},
   pages={573--582},
   issn={0022-2518},
   doi={10.1512/iumj.1984.33.33030},
}
\bib{H2}{article}{
   author={Heinig, Hans P.},
   title={Estimates for operators in mixed weighted $L^p$-spaces},
   journal={Trans. Amer. Math. Soc.},
   volume={287},
   date={1985},
   number={2},
   pages={483--493},
   issn={0002-9947},
   doi={10.2307/1999657},
}
\bib{JT}{article}{
   author={Jodeit, Max, Jr.},
   author={Torchinsky, Alberto},
   title={Inequalities for Fourier transforms},
   journal={Studia Math.},
   volume={37},
   date={1970/71},
   pages={245--276},
   issn={0039-3223},
}
\bib{JS1}{article}{
   author={Jurkat, W. B.},
   author={Sampson, G.},
   title={On maximal rearrangement inequalities for the Fourier transform},
   journal={Trans. Amer. Math. Soc.},
   volume={282},
   date={1984},
   number={2},
   pages={625--643},
   issn={0002-9947},
   doi={10.2307/1999257},
}
\bib{JS2}{article}{
   author={Jurkat, W. B.},
   author={Sampson, G.},
   title={On rearrangement and weight inequalities for the Fourier
   transform},
   journal={Indiana Univ. Math. J.},
   volume={33},
   date={1984},
   number={2},
   pages={257--270},
   issn={0022-2518},
   doi={10.1512/iumj.1984.33.33013},
}
\bib{MS}{article}{
   author={Masty{\l}o, Mieczys{\l}aw},
   author={Sinnamon, Gord},
   title={A Calder\'on couple of down spaces},
   journal={J. Funct. Anal.},
   volume={240},
   date={2006},
   number={1},
   pages={192--225},
   issn={0022-1236},
   doi={10.1016/j.jfa.2006.05.007},
}
\bib{M1}{article}{
   author={Muckenhoupt, Benjamin},
   title={Weighted norm inequalities for the Fourier transform},
   journal={Trans. Amer. Math. Soc.},
   volume={276},
   date={1983},
   number={2},
   pages={729--742},
   issn={0002-9947},
   doi={10.2307/1999080},
}
\bib{M2}{article}{
   author={Muckenhoupt, Benjamin},
   title={A note on two weight function conditions for a Fourier transform
   norm inequality},
   journal={Proc. Amer. Math. Soc.},
   volume={88},
   date={1983},
   number={1},
   pages={97--100},
   issn={0002-9939},
   doi={10.2307/2045117},
}
\bib{OS1}{article}{
   author={Okpoti, Christopher A.},
   author={Persson, Lars-Erik},
   author={Sinnamon, Gord},
   title={An equivalence theorem for some integral conditions with general
   measures related to Hardy's inequality. II},
   journal={J. Math. Anal. Appl.},
   volume={337},
   date={2008},
   number={1},
   pages={219--230},
   issn={0022-247X},
   doi={10.1016/j.jmaa.2007.03.074},
}
\bib{OS2}{article}{
   author={Okpoti, Christopher A.},
   author={Persson, Lars-Erik},
   author={Sinnamon, Gord},
   title={An equivalence theorem for some integral conditions with general
   measures related to Hardy's inequality},
   journal={J. Math. Anal. Appl.},
   volume={326},
   date={2007},
   number={1},
   pages={398--413},
   issn={0022-247X},
   doi={10.1016/j.jmaa.2006.03.015},
}

\bib{R}{book}{ 
   author={Rastegari, Javad},
   title={Fourier Inequalities in Lorentz and Lebesgue Spaces},
   note={Ph.D. Thesis},
   publisher={University of Western Ontario, London, Canada},
   date={2015},
}
\bib{RS}{article}{ 
   author={Rastegari, Javad},
   author={Sinnamon, Gord},
   title={Fourier series in weighted Lorentz spaces},
   journal={J. Fourier Anal. Appl.},
   date={December 23, 2015},
   doi={10.1007/s00041-015-9455-5},
}
\bib{S1}{article}{
   author={Sinnamon, Gord},
   title={The Fourier transform in weighted Lorentz spaces},
   journal={Publ. Mat.},
   volume={47},
   date={2003},
   number={1},
   pages={3--29},
   issn={0214-1493},
   doi={10.5565/PUBLMAT\_47103\_01},
}
\bib{S2}{article}{
   author={Sinnamon, Gord},
   title={Fourier inequalities and a new Lorentz space},
   conference={
      title={Banach and function spaces II},
   },
   book={
      publisher={Yokohama Publ., Yokohama},
   },
   date={2008},
   pages={145--155},
}
\bib{S3}{article}{
   author={Sinnamon, G.},
   title={Bootstrapping weighted Fourier inequalities},
   journal={J. Math. Inequal.},
   volume={3},
   date={2009},
   number={3},
   pages={341--346},
   issn={1846-579X},
   doi={10.7153/jmi-03-34},
}
\bib{SS}{article}{
   author={Sinnamon, Gord},
   author={Stepanov, Vladimir D.},
   title={The weighted Hardy inequality: new proofs and the case $p=1$},
   journal={J. London Math. Soc. (2)},
   volume={54},
   date={1996},
   number={1},
   pages={89--101},
   issn={0024-6107},
   doi={10.1112/jlms/54.1.89},
}
\bib{SW}{article}{
   author={Str{\"o}mberg, Jan-Olov},
   author={Wheeden, Richard L.},
   title={Weighted norm estimates for the Fourier transform with a pair of
   weights},
   journal={Trans. Amer. Math. Soc.},
   volume={318},
   date={1990},
   number={1},
   pages={355--372},
   issn={0002-9947},
   doi={10.2307/2001243},
}
\bib{W}{article}{
   author={Wedestig, Anna},
   title={Some new Hardy type inequalities and their limiting inequalities},
   journal={JIPAM. J. Inequal. Pure Appl. Math.},
   volume={4},
   date={2003},
   number={3},
   pages={Article 61, 15 pp. (electronic)},
   issn={1443-5756},
}
\end{biblist}
\end{bibdiv}

\end{document}